\newtheorem{theorem}{Theorem}[section]
\newtheorem{proposition}[theorem]{Proposition}
\newtheorem{lemma}[theorem]{Lemma}
\newtheorem{corollary}[theorem]{Corollary}
\theoremstyle{definition}
\newtheorem{definition}[theorem]{Definition}
\newtheorem{example}[theorem]{Example}
\newtheorem{question}[theorem]{Question}
\newtheorem{conjecture}[theorem]{Conjecture}
\newtheorem{remark}[theorem]{Remark}
\newcommand{\ZZ}{ \ensuremath{\mathbb{Z}}}
\def\cocoa{{\hbox{\rm C\kern-.13em o\kern-.07em C\kern-.13em o\kern-.15em A}}}
\newcommand{\aaa}{\ensuremath{\mathbf{a}}}
\newcommand{\bb}{\ensuremath{\mathbf{b}}}
\newcommand{\cc}{\ensuremath{\mathbf{c}}}
\newcommand{\dd}{\ensuremath{\mathbf{d}}}
\newcommand{\cd}{\ensuremath{\mathbf{cd}}}
\newcommand{\sd}{\ensuremath{\mathcal{O}}}
\newcommand{\U}{\ensuremath{\mathcal{U}}}
\newcommand{\Z}{\ensuremath{\mathcal{Z}}}
\newcommand{\lk}{\mathrm{lk}}
\begin{document}

\title{The flag $f$-vectors of Gorenstein* order complexes of dimension $3$}

\author{Satoshi Murai}
\address{
Satoshi Murai,
Department of Mathematical Science,
Faculty of Science,
Yamaguchi University,
1677-1 Yoshida, Yamaguchi 753-8512, Japan
}
\email{murai@yamaguchi-u.ac.jp}

\author{Eran Nevo}
\address{
Department of Mathematics,
Ben Gurion University of the Negev,
Be'er Sheva 84105, Israel
}
\email{nevoe@math.bgu.ac.il}

\thanks{
Research of the first author was partially supported by KAKENHI 22740018.
Research of the second author was partially supported by Marie Curie grant IRG-270923.
}



\maketitle

\begin{abstract}
We characterize the $\cd$-indices of Gorenstein* posets of rank $5$, equivalently the flag $f$-vectors of Gorenstein* order complexes of dimension $3$.
As a corollary, we characterize the $f$-vectors of Gorenstein* order complexes in dimensions $3$ and $4$.
This characterization rise a speculated intimate connection between the $f$-vectors of flag homology spheres and the $f$-vectors of Gorenstein* order complexes.
\end{abstract}

\section{Introduction}
The flag $f$-vector is an important invariant of graded posets.
For a Gorenstein* poset $P$,
its flag $f$-vector is efficiently encoded by its $\cd$-index \cite{Bayer-Klapper} whose integer coefficients turn out to be non-negative \cite{ Karu-cd,Stanley-cd}.
Further restrictions on the $\cd$-index of Gorenstein* posets were obtained recently in \cite{Murai-Nevo-S*}, however a full characterization is not even conjectured yet.
In this paper we make a first step in this direction
by
characterizing the $\cd$-indices of Gorenstein* posets of rank $5$, which is the first nontrivial case.

We first recall the definition of the $\cd$-index.
Let $P$ be a graded poset of rank $n+1$ with the minimal element $\hat 0$ and the maximal element $\hat 1$.
The {\em order complex} $\sd(P)$ of $P$ (or of $P -\{\hat 0,\hat 1\}$),
is the (abstract) simplicial complex whose faces are the chains of $P-\{\hat 0,\hat 1\}$.
Thus
$$\sd(P)=\{\{\sigma_1,\sigma_2,\dots,\sigma_k\} \subseteq P-\{\hat 0,\hat 1\}: \sigma_1< \sigma_2 < \cdots < \sigma_k\}.$$
Let $r: P \to \ZZ_{\geq 0}$ denote the rank function of $P$.
For $S \subseteq [n]=\{1,2,\dots,n\}$,
an element $\{\sigma_1,\dots,\sigma_k\} \in \sd(P)$ with $\{r(\sigma_1),\dots,r(\sigma_k)\}=S$
is called an {\em $S$-flag} of $P$.
Let $f_S(P)$ be the number of $S$-flags of $P$.
Define $h_S(P)$ by
$$h_S(P)=\sum_{T \subseteq S} (-1)^{|S|-|T|} f_T (P),$$
where $|X|$ denotes the cardinality of a finite set $X$.
The vectors $(f_S(P): S \subseteq [n])$ and $(h_S(P): S \subseteq [n])$ are
called the \textit{flag $f$-vector}  and \textit{flag $h$-vector} of $P$ (or $\sd (P)$) respectively.
It is convenient to represent flag $h$-vectors as coefficients of non-commutative polynomials.
For $S \subseteq [n]$,
we define a non-commutative monomial $u_S=u_1u_2\cdots u_n$ in variables $\aaa$ and $\bb$
by $u_i=\aaa$ if $i \not \in S$ and $u_i=\bb$ if $i \in S$,
and define
$$\Psi_P(\aaa,\bb)= \sum_{S \subseteq [n]} h_S(P) u_S.$$
We say that $P$ is {\em Gorenstein*} if the simplicial complex $\sd(P)$ is Gorenstein* \cite[p.\ 67]{StanleyGreenBook}.
A typical example of a Gorenstein* poset comes from CW-spheres, namely, a regular CW-complex which is homeomorphic to a sphere.
Indeed, if $P$ is the face poset of a CW-sphere
then $P \cup \{\hat 0, \hat 1\}$ is a Gorenstein* poset.
It is known that if $P$ is Gorenstein* then $\Psi_P(\aaa,\bb)$ can be written as a polynomial $\Phi_P(\cc,\dd)$ in $\cc=\aaa+\bb$
and $\dd=\aaa \bb + \bb \aaa$ \cite{Bayer-Klapper},
and this non-commutative polynomial $\Phi_P(\cc,\dd)$ is called the \textit{$\cc\dd$-index} of $P$.

It is known that the coefficients of $\Phi_P(\cc,\dd)$ are non-negative integers
\cite{Karu-cd,Stanley-cd}
and the coefficient of $\cc^n$ in $\Phi_P(\cc,\dd)$ is $1$.
The main result of this paper is the next result,
which characterizes all possible $\cd$-indices of Gorenstein* posets of rank $5$.

\begin{theorem}\label{thm:cd-rank5}
The $\cd$-polynomial $\cc^4+ \alpha_1 \dd \cc^2 + \alpha_2 \cc\dd\cc + \alpha_3 \cc^2 \dd + \alpha_{13} \dd^2 \in \ZZ_{\geq 0}\langle \cc,\dd\rangle$
is the $\cd$-index of a Gorenstein* poset of rank $5$ if and only if one of the following conditions holds:
\begin{itemize}
\item[(i)] $\alpha_2=0$ and $\alpha_{13}=\alpha_1 \alpha_3$.
\item[(ii)] $\alpha_2=1$ and there are non-negative integers $b_1,b_2,b_3,c_1,c_2,c_3$ such that
$\alpha_1=b_1+b_2+b_3$,
$\alpha_3=c_1+c_2+c_3$ and $\alpha_{13}=\alpha_1\alpha_3 - (b_1 c_1 + b_2 c_2 + b_3c_3)$.
\item[(iii)] $\alpha_2 \geq 2$ and $\alpha_{13} \leq \alpha_1 \alpha_3$.
\end{itemize}
\end{theorem}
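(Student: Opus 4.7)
My plan divides into necessity and sufficiency.

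I first decode the $\cd$-coefficients in terms of flag numbers. Expanding the candidate polynomial $\Phi_P = \cc^4 + \alpha_1\dd\cc^2 + \alpha_2\cc\dd\cc + \alpha_3 \cc^2\dd + \alpha_{13}\dd^2$ as a non-commutative polynomial in $\aaa, \bb$ and reading off coefficients yields
\[
h_{\{1,2\}}(P) = 1 + \alpha_2, \qquad h_{\{1,3\}}(P) = 1 + \alpha_1 + \alpha_2 + \alpha_3 + \alpha_{13},
\]
together with $f_{\{1\}}(P) = 2 + \alpha_1$ and $f_{\{2\}}(P) - f_{\{1\}}(P) = \alpha_2$ (and their Dehn-Sommerville duals). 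Because for every rank-$2$ element $y$ the interval $[\hat 0, y]$ is a rank-$2$ Gorenstein* poset (hence has exactly two atoms), rank-$2$ elements may be identified with edges of a graph $G_{12}$ whose vertex set is the set of rank-$1$ elements of $P$. The rank-selection theorem for Cohen-Macaulay complexes shows $G_{12}$ is connected, and it has $|E(G_{12})|-|V(G_{12})|+1 = \alpha_2+1$ independent cycles. The dual graph $G_{34}$ on rank-$4$ elements (edges being rank-$3$ elements) satisfies symmetric numerics.

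Necessity is then a case analysis on $\alpha_2$. When $\alpha_2 = 0$, $G_{12}$ is a single cycle, and this rigidity, combined with the Gorenstein*-ness of every vertex link in $\sd(P)$, forces $P$ to decompose as an ordinal sum $P_1 * P_2$ of two lower-rank Gorenstein* posets; multiplicativity of the $\cd$-index under ordinal sums then yields $\alpha_{13} = \alpha_1\alpha_3$. When $\alpha_2 = 1$, $G_{12}$ has cyclomatic number $2$, and using that links (of vertices and edges) in $\sd(P)$ are themselves Gorenstein*, one rules out the figure-eight and barbell topologies, leaving only the theta graph. Its three internally disjoint paths have $b_1, b_2, b_3$ internal vertices (so $\alpha_1 = b_1+b_2+b_3$); the dual analysis yields $c_1, c_2, c_3$ from $G_{34}$. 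A direct count of $f_{\{1,3\}}(P)$, organized by which of the three paths each rank-$2$ or rank-$3$ element lies on, then produces $\alpha_{13} = \alpha_1\alpha_3 - \sum_i b_i c_i$. When $\alpha_2 \geq 2$, I prove only the weaker bound $\alpha_{13} \leq \alpha_1\alpha_3$, by expressing $\alpha_{13}$ in terms of the $\cd$-indices of the rank-$3$ lower intervals $[\hat 0, z]$ (all of which are polygons) and applying an averaging estimate.

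For sufficiency I exhibit explicit constructions. Case (i) is realized by ordinal sums of two polygons, adjusted with $0$-sphere factors when $\alpha_1 = 0$ or $\alpha_3 = 0$. Case (ii) is realized by a \emph{theta-join} construction: three polygon-like strips glued along a common pair of edges, producing rank-$5$ Gorenstein* posets whose $G_{12}$ and $G_{34}$ are prescribed theta graphs, so that the flag count matches $\alpha_1\alpha_3 - \sum b_i c_i$. For case (iii), I start from a family of base examples (for instance joins of polygons with suspensions, or the boundary of the $4$-dimensional cross-polytope) and apply controlled stellar subdivisions or bistellar flips to tune $\alpha_2$ and $\alpha_{13}$ independently throughout the permissible region.

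The main obstacle is the necessity argument in case (ii): ruling out the non-theta $2$-cyclic topologies for $G_{12}$ requires careful use of the sphere structure of vertex and edge links in $\sd(P)$, and only after this does the three-summand formula for $\alpha_{13}$ emerge from organizing the flag count $f_{\{1,3\}}$ by paths. The sufficiency in case (iii) is also delicate, since we must realize \emph{every} integer $\alpha_{13} \in [0, \alpha_1\alpha_3]$, which requires a sufficiently flexible family of local modifications on rank-$5$ Gorenstein* posets.
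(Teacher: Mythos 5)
Your necessity analysis is on the right track and largely parallel to the paper's. You correctly decode the relevant flag and $h$-numbers, set up the graph $G_{12}$ (the paper's $\Gamma$), and identify the theta-graph dichotomy when $\alpha_2 = 1$. However, your argument for ruling out the figure-eight topology (``using that links ... are Gorenstein*'') is too vague to be checkable: the paper's actual mechanism is different and quite specific. If $C \cap C'$ were a single point, then every rank-$3$ element $x$ would have $(\hat 0, x)$ equal to $C$ or $C'$, forcing every rank-$4$ element $y$ to cover exactly two rank-$3$ elements with the same lower interval (since $[\hat 0, y]$ must be a $2$-sphere); this makes the dual graph $\Gamma'$ on ranks $3,4$ disconnected, contradicting Cohen--Macaulayness of the rank-selected subposet. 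You should spell out an argument at this level of precision, since it is the crux of Lemma~3.4. (The barbell is indeed excluded because every edge of $\Gamma$ lies on some cycle $(\hat 0, x)$ — you are right to mention it but this too needs to be said explicitly.)

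The genuine gap is in sufficiency. The theorem asserts the existence of a Gorenstein* \emph{poset} with prescribed $\cd$-index, and your proposed constructions for cases (ii) and (iii) are not operations on posets. ``Controlled stellar subdivisions or bistellar flips'' applied to $\sd(P)$ will not in general produce an order complex, let alone the order complex of a Gorenstein* poset; you cannot tune $\alpha_2$ and $\alpha_{13}$ by modifying the simplicial complex freely and expect to land back in the class under consideration. The paper's key device — which your proposal lacks — is the \emph{unzipping} operation $\U(P;x,y)$ on posets, an inverse to Reading's zipping. Unzipping preserves Gorenstein*-ness (Corollary~2.4, via two specific edge subdivisions of $\sd(P)$ that do stay within the order-complex class by Proposition~2.4(3)), and it changes the $\cd$-index by an exactly computable amount, $\Phi_{\U(P;x,y)} = \Phi_P + \Phi_{[\hat 0, y]}\cdot\dd\cdot\Phi_{[x,\hat 1]}$ (Lemma~2.5). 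With this tool, the paper begins from the posets $\hat B_2 * C_3 * \hat B_2$ (case (ii)) and $\hat B_2 * C_4 * \hat B_2$ (case (iii)) and applies carefully chosen unzippings — in case (iii), splitting into three subcases depending on how $\alpha_{13}$ compares to $\alpha_1$ and to multiples of $\alpha_1$ — to reach every target $\cd$-index. Your ``theta-join'' idea captures the right combinatorics of the target posets but gives no mechanism to verify Gorenstein*-ness, and your acknowledgment that case (iii) is ``delicate'' signals that the argument has not actually been carried out. You would need to either rediscover unzipping (or an equivalent poset operation with a computable $\cd$-index effect) or construct the posets explicitly and verify Gorenstein*-ness directly; neither is present in the proposal.
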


Note that,
since knowing the $\cd$-index of $P$
is equivalent to knowing the flag $f$-vector of $\sd(P)$,
Theorem \ref{thm:cd-rank5} characterizes the flag $f$-vectors of Gorenstein* order complexes of dimension $3$.

Recall that,
for a Gorenstein* poset $P$ of rank $n+1$,
the vector $(h_0,h_1,\dots,h_n)$, where $h_i=\sum_{S \subseteq [n], |S|=i} h_S(P)$,
is the usual $h$-vector of $\sd(P)$.
Thus,
as an immediate corollary of Theorem \ref{thm:cd-rank5},
we obtain a characterization of the $f$-vectors of Gorenstein* order complexes of dimension $3$.
We extend this later result to dimension $4$ as well.

The numerical conditions  are conveniently given in terms of $\dd$-vectors of Gorenstein* posets.
For a Gorenstein* poset $P$ of rank $n+1$, we define its {\em $\dd$-vector} $\dd(P)=(\delta_0,\delta_1,\dots,\delta_{\lfloor \frac n 2 \rfloor})$
by
$$\Phi_P(1,\dd)= \delta_0 + \delta_1 \dd + \cdots + \delta_{\lfloor \frac n 2 \rfloor} \dd ^{\lfloor \frac n 2 \rfloor},$$
where $\lfloor x \rfloor$ is the integer part of $x$.
Thus $\delta_0,\delta_1,\dots,\delta_{\lfloor \frac n 2 \rfloor}$
are the coefficients of the polynomial
which is obtained from  the $\cd$-index of $P$ by substituting $\cc=1$.
Note that, by the definition,
$\dd(P)$ is a non-negative vector with $\delta_0=1$.
Since the $\dd$-vector of $P$ and the $h$-vector $h=(h_0,h_1,\dots,h_n)$ of $\sd(P)$
 are related by 
$\sum_{i=0}^n h_i x^{i} = \sum_{i=0}^{\lfloor \frac n 2 \rfloor} 2^{i} \delta_i  x^i(1+x)^{n-2i}$ (cf.\ \cite[Section 4]{Murai-Nevo-S*}),
knowing the $\dd$-vector of $P$ is equivalent to knowing the $h$-vector of $\sd (P)$, equivalently the $f$-vector of $\sd (P)$.

\begin{theorem}\label{thm:rank5,6d-vectors}
Let $(1,x,y) \in \ZZ_{\geq 0}^3$.
\begin{itemize}
\item[(a)]
The vector $(1,x,y)$
is the $\mathbf{d}$-vector of a Gorenstein* poset of rank $5$ if and only if
it satisfies $y \leq {(x-1)^2 \over 4}$ or there are non-negative integers $a$ and $b$ such that $x=a+b$ and $y=ab$.
\item[(b)]
The vector $(1,x,y)$
is the $\mathbf{d}$-vector of a Gorenstein* poset of rank $6$ if and only if
it satisfies $y \leq {x^2 \over 4}$.
\end{itemize}
\end{theorem}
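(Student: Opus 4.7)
Both parts of the theorem follow by specializing $\cc=1$ in $\Phi_P$: for a Gorenstein* poset of rank $n+1$ with $\lfloor n/2 \rfloor \le 2$, we have $\Phi_P(1,\dd) = 1 + x\dd + y\dd^2$, so $x$ and $y$ are the sums of coefficients of the $\cd$-monomials containing one and two $\dd$'s respectively.

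For part~(a), I would proceed by direct case analysis of Theorem~\ref{thm:cd-rank5}. Necessity: case~(i) gives $(x,y) = (\alpha_1+\alpha_3, \alpha_1\alpha_3)$, of product form; cases~(ii) and~(iii) have $\alpha_2 \ge 1$, forcing $\alpha_1+\alpha_3 \le x-1$ and hence $y = \alpha_{13} \le \alpha_1\alpha_3 \le \lfloor(x-1)^2/4\rfloor$ by AM--GM. Sufficiency: if $y = ab$ with $a+b=x$, apply Theorem~\ref{thm:cd-rank5}(i) with $(\alpha_1,\alpha_2,\alpha_3,\alpha_{13}) = (a,0,b,ab)$. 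Otherwise $y \le (x-1)^2/4$, and we invoke Theorem~\ref{thm:cd-rank5}(ii) with $\alpha_2 = 1$: for each $k \in \{1,\dots,\lfloor(x-1)/2\rfloor\}$ and each $s \in [0, x-1-k]$, setting $\alpha_1 = k$, $\alpha_3 = x-1-k$, $(b_1,b_2,b_3) = (1,k-1,0)$, and $(c_1,c_2,c_3) = (s,0,x-1-k-s)$ gives $b_1c_1+b_2c_2+b_3c_3 = s$ and hence $\alpha_{13} = k(x-1-k) - s$; the resulting intervals $[(k-1)(x-1-k), k(x-1-k)]$ overlap consecutively and cover $[0, \lfloor(x-1)^2/4\rfloor]$.

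For the necessity direction of part~(b), $y \le x^2/4$ is a quadratic inequality on the eight $\cd$-coefficients of a rank-6 Gorenstein* poset. My plan is to deduce it from inequalities of the form $\gamma_j \le \beta_{i(j)}\beta_{i(j)+1}$ pairing each $\dd^2$-type coefficient $\gamma_j$ with two adjacent $\dd$-type coefficients, in the spirit of the bounds in \cite{Murai-Nevo-S*}. Summing these three inequalities yields $y \le \beta_1\beta_2 + \beta_2\beta_3 + \beta_3\beta_4 \le (\beta_1+\beta_3)(\beta_2+\beta_4)$, and AM--GM gives $(\beta_1+\beta_3)(\beta_2+\beta_4) \le (\beta_1+\beta_2+\beta_3+\beta_4)^2/4 = x^2/4$.

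For sufficiency in part~(b), any $(1,x,y)$ realized in rank~5 lifts to rank~6 via suspension, since $\Phi_{\Sigma P} = \cc \cdot \Phi_P$ satisfies $\Phi_{\Sigma P}(1,\dd) = \Phi_P(1,\dd)$; by part~(a) this covers $(x,y)$ with $y \le (x-1)^2/4$ or $y = ab$ for some $a+b = x$. The residual gap, consisting of $(x,y)$ with $(x-1)^2/4 < y \le \lfloor x^2/4\rfloor$ and $y$ not of the form $ab$ with $a+b=x$---the first instance being $(x,y) = (5,5)$---must be handled by direct rank-6 Gorenstein* constructions such as joins of rank-3 and rank-4 Gorenstein* posets combined with further subdivisions tailored to adjust $\cd$-coefficients. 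The main obstacle is precisely this gap case: standard joins and suspensions only yield rank-5-form or product-form $\dd$-vectors, so the exceptional gap values require ad hoc rank-6 constructions together with explicit verification that the resulting $\cd$-index has nonnegative coefficients.
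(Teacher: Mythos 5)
Your part~(a) argument is correct and tracks the paper's strategy closely: reduce to Theorem~\ref{thm:cd-rank5} by reading off $x=\alpha_1+\alpha_2+\alpha_3$ and $y=\alpha_{13}$, use cases~(ii)--(iii) to force $y\le \alpha_1\alpha_3\le \lfloor(x-1)^2/4\rfloor$ whenever $\alpha_2\ge 1$, and realize non-product values with $\alpha_2=1$. Your explicit family $(b_1,b_2,b_3)=(1,k-1,0)$, $(c_1,c_2,c_3)=(s,0,x-1-k-s)$ does sweep out all of $[0,\lfloor(x-1)^2/4\rfloor]$, and this is a mild variant of the paper's choice (the paper mixes $\alpha_2\ge 2$ and $\alpha_2=1$ depending on the value; your all-$\alpha_2=1$ version also works and is self-contained).

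For the necessity of part~(b) your plan is again the right one (sum three pairwise inequalities, factor, and apply AM--GM), but the specific inequalities you propose, $\gamma_j\le\beta_{i(j)}\beta_{i(j)+1}$ with adjacent indices, are not the ones proved in \cite{Murai-Nevo-S*}. The actual inequalities are $\alpha_{13}\le\alpha_1\alpha_3$, $\alpha_{14}\le\alpha_1\alpha_4$, $\alpha_{24}\le\alpha_2\alpha_4$, which sum to $y\le\alpha_1\alpha_3+\alpha_1\alpha_4+\alpha_2\alpha_4\le(\alpha_1+\alpha_2)(\alpha_3+\alpha_4)\le x^2/4$. Your adjacent pairing would need a separate justification that is not in the cited source, so as written this step is not closed.

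The genuine gap is the sufficiency of part~(b) in the range $(x-1)^2/4<y\le x^2/4$ with $y$ not of product form. You correctly identify this as the hard case and correctly observe that suspension only lifts rank-$5$ realizations, but you stop at ``ad hoc rank-$6$ constructions'' without producing one. The paper's construction is concrete: given $(x-1)^2/4<y\le x^2/4$, pick positive $a,b$ with $a+b=x$ and $a(b-1)<y\le ab$, set $r=ab-y$ (so $0\le r<a$), start from $Q=C_{a-r+2}*\hat B_2*C_{b+1}$ with $\dd(Q)=(1,\,a-r+b-1,\,(a-r)(b-1))$, and then unzip a rank-$3$/rank-$2$ cover pair $r$ times and a rank-$4$/rank-$3$ cover pair once. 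Lemma~\ref{obs:zip cd-index} shows each unzipping adds $(0,1,b-1)$ and $(0,1,a-r)$ respectively to $\dd$, landing exactly on $(1,x,y)$. Without supplying some such explicit construction (or an alternative covering argument), your proof of part~(b) is incomplete.
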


We notice that the vectors in part (b) are exactly the $\gamma$-vectors of homology $4$-spheres (details will be given in Section \ref{sec:Q}), and that the vectors in part (a) are conjectured in \cite{Gal} to be exactly the $\gamma$-vectors of homology $3$-spheres.
This gives rise to Question \ref{Q1}, about an interesting relation between $f$-vectors of flag homology spheres and those of Gorenstein* order complexes.



Outline of the paper: in Section \ref{sec:unzip} we describe a poset construction, which we call \emph{unzipping}.
Unzipping is in a certain sense an inverse of zipping \cite{Reading} and will be used to prove sufficiency in Theorems \ref{thm:cd-rank5} and \ref{thm:rank5,6d-vectors}; Theorem \ref{thm:cd-rank5} is proved in Section \ref{sec:cd-5}; $\dd$-vectors are discussed in Section \ref{sec:d-vectors} where Theorem \ref{thm:rank5,6d-vectors} is proved; in Section \ref{sec:Q} we discuss questions relating $f$-vectors of flag homology spheres with those of Gorenstein* order complexes.

\section{Unzipping}\label{sec:unzip}

We introduce a construction on graded posets which we call \emph{unzipping}. Post composition with the corresponding \emph{zipping}, as defined by Reading \cite{Reading}, gives back the original poset.

\begin{definition}\label{def:unzip}
Let $P$ be a graded poset with $\hat 0$ and $\hat 1$,
and let $r:P\rightarrow \ZZ_{\geq 0}$ be the rank function of $P$.
\begin{itemize}
\item[(1)] (Reading \cite{Reading})
Let $x,y,z\in P-\{\hat 0,\hat 1\}$ be such that (i) $x$ covers exactly $y$ and $z$,
(ii) $x$ is the unique minimal upper bound of $y$ and $z$,
and (iii) $y$ and $z$ cover exactly the same elements.
Let $\Z(P;x,y,z)$  be the poset obtained from $P$ by deleting $x,y$ and adding the relations $w>z$ for all relations $w>y$.

\item[(2)]
Let $x,y\in P -\{\hat 0,\hat 1\}$ be such that $x$ covers $y$.
We define the graded poset $\U (P;x,y)$ as follows:
delete the cover relation $y<x$, add elements $x',y'$ with ranks $r(x')=r(x), r(y')=r(y)$ and add cover relations (i) $x'<w$ for all covers $x<w$, (ii) $w<y'$ for all covers $w<y$ and (iii) $y'<x', y<x'$ and $ y'<x$.
\end{itemize}
The operations $P \to \Z(P;x,y,z)$ and $P \to \U(P;x,y)$ are called {\em zipping} and {\em unzipping} respectively.
\end{definition}

\begin{example}

\begin{center}
\unitlength 0.1in
\begin{picture}( 49.1000, 18.1700)( 13.5000,-23.1500)
%
\special{pn 8}%
\special{pa 2000 2184}%
\special{pa 2600 1864}%
\special{fp}%
\special{pa 2000 1864}%
\special{pa 2000 2184}%
\special{fp}%
\special{pa 1400 1864}%
\special{pa 2000 2184}%
\special{fp}%
%
\special{pn 8}%
\special{pa 1400 1384}%
\special{pa 1400 1864}%
\special{fp}%
\special{pa 1400 1864}%
\special{pa 2000 1384}%
\special{fp}%
\special{pa 2000 1384}%
\special{pa 2600 1864}%
\special{fp}%
\special{pa 2600 1864}%
\special{pa 2600 1384}%
\special{fp}%
\special{pa 2600 1384}%
\special{pa 2000 1864}%
\special{fp}%
\special{pa 2000 1864}%
\special{pa 1400 1384}%
\special{fp}%
%
\special{pn 8}%
\special{pa 1400 1384}%
\special{pa 1700 904}%
\special{fp}%
\special{pa 1700 904}%
\special{pa 2000 1384}%
\special{fp}%
\special{pa 2000 1384}%
\special{pa 2300 904}%
\special{fp}%
\special{pa 2300 904}%
\special{pa 1400 1384}%
\special{fp}%
%
\special{pn 8}%
\special{pa 2300 904}%
\special{pa 2600 1384}%
\special{fp}%
\special{pa 2600 1384}%
\special{pa 1700 904}%
\special{fp}%
%
\special{pn 8}%
\special{pa 1700 904}%
\special{pa 2000 584}%
\special{fp}%
\special{pa 2000 584}%
\special{pa 2310 904}%
\special{fp}%
%
\special{pn 8}%
\special{sh 0}%
\special{ar 2000 2184 50 40  0.0000000 6.2831853}%
%
\special{pn 8}%
\special{sh 0}%
\special{ar 2000 1864 50 40  0.0000000 6.2831853}%
%
\special{pn 8}%
\special{sh 0}%
\special{ar 1400 1864 50 40  0.0000000 6.2831853}%
%
\special{pn 8}%
\special{sh 0}%
\special{ar 2600 1864 50 40  0.0000000 6.2831853}%
%
\special{pn 8}%
\special{sh 0}%
\special{ar 2600 1384 50 40  0.0000000 6.2831853}%
%
\special{pn 8}%
\special{sh 0}%
\special{ar 2000 1384 50 40  0.0000000 6.2831853}%
%
\special{pn 8}%
\special{sh 0}%
\special{ar 1400 1384 50 40  0.0000000 6.2831853}%
%
\special{pn 8}%
\special{sh 0}%
\special{ar 2000 584 50 40  0.0000000 6.2831853}%
%
\special{pn 8}%
\special{sh 0}%
\special{ar 2290 904 50 40  0.0000000 6.2831853}%
%
\special{pn 8}%
\special{sh 0}%
\special{ar 1690 904 50 40  0.0000000 6.2831853}%
\put(20.0000,-24.0000){\makebox(0,0){$P$}}%
\put(40.1000,-24.0000){\makebox(0,0){$\Z(P;x,y,z)$}}%
\put(28.0000,-13.8300){\makebox(0,0){$x$}}%
\put(28.0000,-18.6300){\makebox(0,0){$y$}}%
\put(22.0000,-18.6300){\makebox(0,0){$z$}}%
%
\special{pn 8}%
\special{pa 5010 1864}%
\special{pa 5010 1384}%
\special{fp}%
\special{pa 5010 1384}%
\special{pa 5410 1864}%
\special{fp}%
\special{pa 5410 1864}%
\special{pa 5810 1384}%
\special{fp}%
\special{pa 5810 1384}%
\special{pa 6210 1864}%
\special{fp}%
\special{pa 6210 1864}%
\special{pa 6210 1384}%
\special{fp}%
\special{pa 6210 1384}%
\special{pa 5810 1864}%
\special{fp}%
\special{pa 5810 1864}%
\special{pa 5410 1384}%
\special{fp}%
\special{pa 5410 1384}%
\special{pa 5010 1864}%
\special{fp}%
%
\special{pn 8}%
\special{pa 5010 1864}%
\special{pa 5610 2184}%
\special{fp}%
\special{pa 5610 2184}%
\special{pa 5410 1864}%
\special{fp}%
\special{pa 5810 1864}%
\special{pa 5610 2184}%
\special{fp}%
\special{pa 5610 2184}%
\special{pa 6210 1864}%
\special{fp}%
%
\special{pn 8}%
\special{pa 6210 1384}%
\special{pa 5810 904}%
\special{fp}%
\special{pa 5810 904}%
\special{pa 5810 1384}%
\special{fp}%
\special{pa 5810 1384}%
\special{pa 5410 904}%
\special{fp}%
\special{pa 5410 904}%
\special{pa 5410 1384}%
\special{fp}%
\special{pa 5410 1384}%
\special{pa 5810 904}%
\special{fp}%
\special{pa 5810 904}%
\special{pa 5010 1384}%
\special{fp}%
\special{pa 5010 1384}%
\special{pa 5410 904}%
\special{fp}%
%
\special{pn 8}%
\special{pa 5410 904}%
\special{pa 5610 584}%
\special{fp}%
\special{pa 5610 584}%
\special{pa 5810 904}%
\special{fp}%
%
\special{pn 8}%
\special{sh 0}%
\special{ar 5610 584 50 40  0.0000000 6.2831853}%
%
\special{pn 8}%
\special{sh 0}%
\special{ar 5810 904 50 40  0.0000000 6.2831853}%
%
\special{pn 8}%
\special{sh 0}%
\special{ar 5010 1384 50 40  0.0000000 6.2831853}%
%
\special{pn 8}%
\special{sh 0}%
\special{ar 5410 1384 50 40  0.0000000 6.2831853}%
%
\special{pn 8}%
\special{sh 0}%
\special{ar 5810 1384 50 40  0.0000000 6.2831853}%
%
\special{pn 8}%
\special{sh 0}%
\special{ar 6210 1864 50 40  0.0000000 6.2831853}%
%
\special{pn 8}%
\special{sh 0}%
\special{ar 5810 1864 50 40  0.0000000 6.2831853}%
%
\special{pn 8}%
\special{sh 0}%
\special{ar 5410 1864 50 40  0.0000000 6.2831853}%
%
\special{pn 8}%
\special{sh 0}%
\special{ar 5010 1864 50 40  0.0000000 6.2831853}%
%
\special{pn 8}%
\special{sh 0}%
\special{ar 5610 2184 50 40  0.0000000 6.2831853}%
\put(56.1000,-24.0000){\makebox(0,0){$\U(P;x,y)$}}%
\put(44.1000,-18.6300){\makebox(0,0){$z$}}%
\put(56.1000,-18.6300){\makebox(0,0){$z$}}%
\put(60.1000,-13.8300){\makebox(0,0){$x$}}%
\put(60.1000,-18.6300){\makebox(0,0){$y$}}%
\put(64.1000,-13.8300){\makebox(0,0){$x'$}}%
\put(64.1000,-18.6300){\makebox(0,0){$y'$}}%
\put(22.0000,-21.8300){\makebox(0,0){$\hat 0$}}%
\put(22.0000,-5.8300){\makebox(0,0){$\hat 1$}}%
\put(32.0000,-13.8300){\makebox(0,0){$\Rightarrow$}}%
%
\special{pn 8}%
\special{pa 3600 1864}%
\special{pa 3900 2184}%
\special{fp}%
\special{pa 3900 2184}%
\special{pa 4200 1864}%
\special{fp}%
\special{pa 4200 1864}%
\special{pa 4200 1384}%
\special{fp}%
\special{pa 4200 1384}%
\special{pa 3600 1864}%
\special{fp}%
\special{pa 3600 1864}%
\special{pa 3600 1384}%
\special{fp}%
\special{pa 3600 1384}%
\special{pa 4200 1864}%
\special{fp}%
%
\special{pn 8}%
\special{pa 4200 1384}%
\special{pa 4200 904}%
\special{fp}%
\special{pa 4200 904}%
\special{pa 3600 1384}%
\special{fp}%
\special{pa 3600 1384}%
\special{pa 3600 904}%
\special{fp}%
\special{pa 3600 904}%
\special{pa 4200 1384}%
\special{fp}%
%
\special{pn 8}%
\special{pa 4200 904}%
\special{pa 3900 584}%
\special{fp}%
\special{pa 3900 584}%
\special{pa 3600 904}%
\special{fp}%
%
\special{pn 8}%
\special{sh 0}%
\special{ar 3610 904 50 40  0.0000000 6.2831853}%
%
\special{pn 8}%
\special{sh 0}%
\special{ar 3610 1384 50 40  0.0000000 6.2831853}%
%
\special{pn 8}%
\special{sh 0}%
\special{ar 4210 1384 50 40  0.0000000 6.2831853}%
%
\special{pn 8}%
\special{sh 0}%
\special{ar 4210 1864 50 40  0.0000000 6.2831853}%
%
\special{pn 8}%
\special{sh 0}%
\special{ar 3610 1864 50 40  0.0000000 6.2831853}%
%
\special{pn 8}%
\special{sh 0}%
\special{ar 4210 904 50 40  0.0000000 6.2831853}%
%
\special{pn 8}%
\special{sh 0}%
\special{ar 3900 584 50 40  0.0000000 6.2831853}%
%
\special{pn 8}%
\special{sh 0}%
\special{ar 3900 2184 50 40  0.0000000 6.2831853}%
%
\special{pn 8}%
\special{pa 6200 1384}%
\special{pa 5400 904}%
\special{fp}%
%
\special{pn 8}%
\special{sh 0}%
\special{ar 5410 904 50 40  0.0000000 6.2831853}%
%
\special{pn 8}%
\special{sh 0}%
\special{ar 6210 1384 50 40  0.0000000 6.2831853}%
\end{picture}%
\end{center}
\medskip
\end{example}

\begin{remark}
In general $\Z(P;x,y,z)$ may not be graded.
However, $\Z(P;x,y,z)$ is graded if $P$ is {\em thin},
namely, if for every $x \geq y$ in $P$ with $r(x)-r(y)=2$
the closed interval $[x,y]$ is a Boolean algebra of rank $2$.
See \cite[Proposition 4.4]{Reading}.
\end{remark}

In the rest of this section,
we study basic properties of zipping and unzipping.
Let $\Delta$ be a simplicial complex on the vertex set $V$.
The \textit{link of $F \in \Delta$ in $\Delta$}
is the simplicial complex
$$
\lk_\Delta(F) = \{ G \subseteq V \setminus F : G \cup F \in \Delta\}.
$$

\begin{definition}
Let $\Delta$ be a simplicial complex and let $\{i,j \}$ be an edge of $\Delta$.
The {\em (stellar) edge subdivision of $\Delta$ with respect to $\{i,j\}$} is the simplicial complex
$$
\{ F \in \Delta: F \not \supset \{i,j\}\}
\cup \{ F\cup\{v\},\ F\cup\{i,v\},\ F \cup \{j,v\}: F \in \lk_\Delta(\{i,j\})\},
$$
where $v$ is a new vertex.
The \textit{edge contraction of $i$ to $j$ in $\Delta$}
is the simplicial complex $\Delta'$ which is obtained
from $\Delta$ by identifying the vertices $i$ and $j$,
in other words,
$$\Delta' =
\{ F \in \Delta: i \not \in F\} \cup \{ (F \setminus \{i\}) \cup \{j\}:
i \in F \in \Delta\}.
$$
\end{definition}

\begin{proposition}\label{prop:unzip-edge sd}
With the same notation as in Definition \ref{def:unzip},
\begin{itemize}
\item[(1)] $\Z(\U(P;x,y);x',y',y)=P$.
\item[(2)] $\sd (\Z(P;x,y,z))$ is obtained from $\sd (P)$ by two successive edge contractions: first contract $y$ to $x$, then contract $x$ to $z$.
\item[(3)] $\sd (\U(P;x,y))$ is obtained from $\sd (P)$ by two successive edge subdivisions: first subdivide $\{x,y\}$ by $x'$, then subdivide $\{x,x'\}$ by $y'$.
\end{itemize}
\end{proposition}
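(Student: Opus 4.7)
The plan is to verify each of the three parts by unfolding the definitions of zipping, unzipping, edge contraction and edge subdivision, and then matching the combinatorial data on both sides.

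For (1), I would trace what happens to the cover relations. In $\U(P;x,y)$ the new element $x'$ covers exactly $y$ and $y'$, these two cover the same elements in $\U(P;x,y)$ (namely the original covers of $y$ in $P$), and $x'$ is the unique minimal common upper bound of $y$ and $y'$, since any other common upper bound must be $\geq x$ in $P$ and therefore strictly above $x'$ in rank. Thus the hypotheses of Reading's zipping apply to the triple $(x',y',y)$. Performing $\Z(\U(P;x,y);x',y',y)$ deletes $x',y'$ and transfers every cover of $y'$ to $y$: the cover $y'<x$ becomes $y<x$, restoring the original cover of $P$, while the covers $w<y'$ and $x'<w$ disappear together with $x',y'$, and every other cover already coincides with one of $P$.

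For (2), both complexes have vertex set $P-\{\hat 0,\hat 1,x,y\}$, and the task is to match their faces. Given a chain $F$ of $P$, the two successive contractions replace each occurrence of $y$ or $x$ in $F$ by $z$, with duplicates collapsed (note that $F$ cannot contain both $y$ and $z$, as they are incomparable of equal rank). That the result is a chain in $\Z(P;x,y,z)$ uses condition (iii) of the zipping — $y$ and $z$ share the same lower covers — to preserve downward comparabilities, and the very definition of the new relations $w>z$ added in $\Z$ to preserve upward ones. Conversely, given a chain $G$ in $\Z(P;x,y,z)$, if $z\notin G$ then $G$ is already a chain of $P$; if $z\in G$ and the step out of $z$ in $G$ uses the new relation (so the element immediately above $z$ is $>_P y$ but not $>_P z$), then replacing $z$ by $y$ in $G$ produces a chain of $P$ which contracts back to $G$. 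The element below $z$ in $G$, if any, is automatically $<_P z$ — the new relations only point upward from $z$ — and is therefore also $<_P y$ by condition (iii).

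For (3), I would partition the chains of $\U(P;x,y)$ by which of $x',y'$ they contain. Chains containing neither $x'$ nor $y'$ are exactly the chains of $P$ not containing both $x$ and $y$, since $x$ and $y$ are rendered incomparable in $\U(P;x,y)$; these match the faces of $\sd(P)$ preserved under the first subdivision. Chains containing at least one of $x',y'$ split, by a direct check of the comparabilities in $\U(P;x,y)$, into the five configurations at ranks $r(y),r(x)$: $\{x'\}$, $\{y'\}$, $\{y,x'\}$, $\{y',x\}$, $\{y',x'\}$. Attaching any such configuration to an element $G=G_{<y}\cup G_{>x}$ of $\lk_{\sd(P)}(\{x,y\})$ yields exactly the five families of new faces created by the two successive edge subdivisions described in the statement.

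The main subtlety is the lifting step in (2): chains of $\Z(P;x,y,z)$ which genuinely invoke the new zipping relation must be lifted to chains of $P$, and it is precisely here that condition (iii) of the zipping (equal lower covers for $y$ and $z$) is used essentially. Once this bookkeeping is in place, (1) and (3) reduce to routine checks of cover relations and of chain enumeration.
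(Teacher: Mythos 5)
Your proof is correct and follows essentially the same strategy as the paper — unfolding the definitions of zipping, unzipping, edge contraction, and edge subdivision and matching chains against faces. Your treatment is somewhat more explicit than the paper's (you verify the zipping hypotheses for the triple $(x',y',y)$ in part (1) where the paper says only "follows directly," you add the lifting direction in part (2), and in part (3) you enumerate all faces via the five local configurations rather than the paper's shortcut through maximal chains), but the underlying argument is the same.
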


\begin{proof}
Part (1) follows directly from Definition \ref{def:unzip}.

By the definition of the edge contraction,
contracting $y$ to $x$ in $\sd(P)$ and then contracting $x$ to $z$ results in the simplicial complex $\Delta$ obtained from $\sd(P)$ by replacing $x$ and $y$ by $z$ in all simplices (and deleting repetitions, of vertices in a simplex, and of simplices).
As (i) $z \in \Z(P;x,y,z)$ covers the same elements as $y$ and $z$ in $P$,
(ii) $w>z$ in $\Z(P;x,y,z)$ if and only if $w \ne x$ and either $w>y$ or $w>z$ in $P$,
and (iii) $x$ covers only $y,z\in P$, we conclude that the simplices of $\Delta$ are exactly the chains in $\Z(P;x,y,z)-\{\hat{0},\hat{1}\}$, proving (2).

To prove (3), let $\Gamma$ be the simplicial complex obtained from $\sd (P)$ by two successive edge subdivisions: first subdivide $\{x,y\}$ by $x'$, then subdivide $\{x,x'\}$ by $y'$.
The maximal simplices of $\Gamma$ are obtained from those of $\sd (P)$ by replacing each $x,y\in F\in \sd(P)$ in three ways, by either $x,y'$, or $y',x'$, or $x',y$; thus a maximal simplex $F\in \sd(P)$ corresponds to 3 simplices in $\Gamma$. Thus, by construction, these are exactly the maximal chains in $\U(P;x,y)-\{\hat{0},\hat{1}\}$.
\end{proof}

We say that a simplicial complex $\Delta$ satisfies the \textit{Link condition
with respect to an edge $\{i,j\} \in \Delta$} if
$\lk_\Delta(\{i\}) \cap \lk_\Delta(\{j\})  = \lk_\Delta(\{i,j\})$.
As the edge contractions in Proposition \ref{prop:unzip-edge sd} satisfy the Link Condition w.r.t.\ $\{x,y\}$ and $\{x,z\}$ in the corresponding simplicial complexes,
they preserve the PL-type for simplicial spheres \cite[Theorem 1.4]{Nevo-VK}, and preserve being a homology sphere \cite[Proposition 2.3]{Nevo-Novinsky}.
Hence we conclude the following result, obtained by Reading \cite[Theorem 4.7]{Reading} for the case of zipping in Gorenstein* posets \cite{Reading}.
\begin{corollary}\label{cor:zip Gorenstien}
If $P$ is a Gorenstein* poset (or a CW-sphere)
then so are $\U(P;x,y)$ and $\Z(P;x,y,z)$.
\end{corollary}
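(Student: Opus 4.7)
The strategy is to translate the corollary into topology of the order complexes via Proposition \ref{prop:unzip-edge sd}, and then invoke the preservation results for edge contractions and stellar edge subdivisions referenced in the preceding paragraph.

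For zipping, by Proposition \ref{prop:unzip-edge sd}(2) the order complex $\sd(\Z(P;x,y,z))$ is obtained from $\sd(P)$ by first contracting $y$ to $x$ and then contracting $x$ to $z$. I would verify the Link Condition for each contraction directly from the poset structure. A simplex $C$ of $\sd(P)$ lies in $\lk(\{u\})\cap \lk(\{v\})$ exactly when $C\cup\{u\}$ and $C\cup\{v\}$ are both chains; when $u$ and $v$ are themselves comparable in $P$, this already forces every element of $C$ to be comparable with both $u$ and $v$, and hence $C\cup\{u,v\}$ to be a chain, placing $C$ in $\lk(\{u,v\})$. This immediately handles the first contraction, since $y<x$. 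For the second contraction, performed in the intermediate complex obtained from $\sd(P)$ by identifying $y$ with $x$, I would use hypotheses (i)--(iii) of Definition \ref{def:unzip}(1): condition (iii) matches the lower links of $y$ and $z$, while (i) and (ii) pin down the upper links so that any chain extending both to $x$ and to $z$ extends to $\{x,z\}$. Once the Link Condition is in hand, \cite[Proposition 2.3]{Nevo-Novinsky} transfers the homology-sphere property and \cite[Theorem 1.4]{Nevo-VK} transfers PL-sphere type, giving the Gorenstein* and CW-sphere conclusions for $\Z(P;x,y,z)$.

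For unzipping, Proposition \ref{prop:unzip-edge sd}(3) realizes $\sd(\U(P;x,y))$ as two successive stellar edge subdivisions of $\sd(P)$. Stellar edge subdivision is a standard PL-move, realized by a simplicial refinement of a single closed star, so it preserves both PL-homeomorphism type and simplicial homology; in particular the PL-sphere and homology-sphere properties pass from $\sd(P)$ to $\sd(\U(P;x,y))$. (Alternatively, Proposition \ref{prop:unzip-edge sd}(1) lets one run the zipping argument backwards: the subdivided complex contracts along edges satisfying the Link Condition back to $\sd(P)$, and the cited results are bidirectional.) Either way one concludes that $\U(P;x,y)$ is Gorenstein*, respectively the face poset of a CW-sphere, whenever $P$ is.

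I expect the main obstacle to be the careful verification of the Link Condition for the second contraction in the zipping argument, because it takes place in an auxiliary complex that is no longer the order complex of a graded poset. However, this complex is nothing more than $\sd(P)$ with the single pair of vertices $\{x,y\}$ identified, so a direct case analysis using the three zipping hypotheses suffices. Everything else is a routine reduction to the cited preservation theorems.
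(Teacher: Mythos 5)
Your proposal follows essentially the same route as the paper: realize zipping and unzipping topologically via Proposition \ref{prop:unzip-edge sd}, verify the Link Condition for the two edge contractions, and invoke the cited preservation results of Nevo--Vallette/Karu and Nevo--Novinsky for PL-type and homology spheres (with unzipping handled by the subdivision picture). The paper asserts the Link Condition without writing out the check, so your sketch adds useful detail; just note that in the second contraction the decisive hypothesis is that $x$ is the unique minimal upper bound of $y$ and $z$ (condition (ii)), which guarantees that any vertex above both $y$ and $z$ is $\geq x$, while condition (iii) is what makes $\Z(P;x,y,z)$ graded rather than what drives the Link Condition itself.
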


We use the following formula, observed by Reading \cite[Theorem 4.6]{Reading}, later on.
\begin{lemma}[Reading]\label{obs:zip cd-index}
The $\cd$-index changes under zipping as follows:
$$\Phi_P(\cc,\dd)=\Phi_{\Z(P;x,y,z)}(\cc,\dd) + \Phi_{[\hat{0},y]}(\cc,\dd) \cdot \dd \cdot \Phi_{[x,\hat{1}]}(\cc,\dd).$$
Thus, for unzipping we get
$$\Phi_{\U(P;x,y)}(\cc,\dd)=\Phi_P(\cc,\dd)+ \Phi_{[\hat{0},y]} (\cc,\dd) \cdot \dd \cdot \Phi_{[x,\hat{1}]}(\cc,\dd).$$
\end{lemma}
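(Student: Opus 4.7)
My plan is to establish the identity at the level of flag $f$-vectors, equivalently the $\aaa,\bb$-index $\Psi_P(\aaa,\bb)=\sum_S h_S(P) u_S$, and then lift to the $\cc,\dd$-index. The lift is automatic because, by Corollary~\ref{cor:zip Gorenstien} together with the fact that every interval of a Gorenstein* poset is Gorenstein*, each of $P$, $\Z(P;x,y,z)$, $[\hat 0, y]$, $[x, \hat 1]$ has an $\aaa,\bb$-index which rewrites uniquely as a polynomial in $\cc$ and $\dd$, so an $\aaa,\bb$-identity translates directly into the desired $\cc,\dd$-identity. The unzipping formula then follows from the zipping one via Proposition~\ref{prop:unzip-edge sd}(1), applied to $\U(P;x,y)$ at $(x',y',y)$.

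To compare flag $f$-vectors, I would use Proposition~\ref{prop:unzip-edge sd}(2): $\sd(\Z(P;x,y,z))$ is obtained from $\sd(P)$ by the successive edge contractions $y\to x$ and $x\to z$. This yields a natural surjection $\pi$ from chains of $P-\{\hat 0,\hat 1\}$ onto chains of $\Z(P;x,y,z)-\{\hat 0,\hat 1\}$, by replacing every occurrence of $x$ or $y$ in a chain by $z$. A chain $C\in\sd(\Z)$ that avoids $z$ has unique pre-image $C$ itself. Otherwise write $C=L\cup\{z\}\cup U$ with $L, U$ the parts of $C$ below and above $z$; the pre-images arise by substituting for $\{z\}$ one of the five chains $\{z\},\{y\},\{x\},\{y,x\},\{z,x\}$ of $P$, subject to order compatibility with $L$ and $U$. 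Conditions (i) and (iii) imply that $y$ and $z$ share the same strict down-set in $P$ (which also equals that of $x$), so $L$ imposes no constraint; only $U$ does. Classifying each element of $U$ in $P$ as type $\alpha$ (above $x$), type $\beta$ (above $y$ but not $z$), or type $\gamma$ (above $z$ but not $y$), one checks using condition (ii) that any chain $U$ consists of an initial segment of only $\beta$'s or only $\gamma$'s (possibly empty), followed by a segment of $\alpha$'s. If $U$ contains a $\beta$ (resp.\ $\gamma$) element, only the substitution $\{y\}$ (resp.\ $\{z\}$) is valid; if $U$ is all $\alpha$, all five substitutions are.

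Writing $i=r(y)$, the non-$\alpha$ cases produce a single pre-image of $C$ with the same rank set as $C$, hence contribute $0$ to $f_T(P)-f_T(\Z)$. The all-$\alpha$ case produces five pre-images distributed among three distinct rank sets, and summing yields
$$f_T(P)-f_T(\Z)=c(T)\cdot f_{T\cap[i-1]}([\hat 0, y])\cdot f_{(T\cap\{i+2,\ldots,n\})-(i+1)}([x, \hat 1]),$$
where $c(T)=|T\cap\{i,i+1\}|\in\{0,1,2\}$. M\"obius inversion then collapses these contributions: the signed sum of $c(T)$ over $T''=T\cap\{i,i+1\}\in\{\{i\},\{i+1\},\{i,i+1\}\}$ cancels when $\{i,i+1\}\subseteq S$, so that $h_S(P)-h_S(\Z)=0$ unless $|S\cap\{i,i+1\}|=1$, in which case it equals $h_{S\cap[i-1]}([\hat 0, y])\cdot h_{(S\cap\{i+2,\ldots,n\})-(i+1)}([x, \hat 1])$. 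This is precisely the coefficient of $u_S$ in $\Psi_{[\hat 0, y]}(\aaa,\bb)\cdot(\aaa\bb+\bb\aaa)\cdot\Psi_{[x, \hat 1]}(\aaa,\bb)$, whose $\cc,\dd$-rewrite is $\Phi_{[\hat 0, y]}(\cc, \dd)\cdot\dd\cdot\Phi_{[x, \hat 1]}(\cc, \dd)$. The main obstacle is the pre-image bookkeeping in the second paragraph: extracting the $\alpha/\beta/\gamma$ trichotomy from conditions (i)--(iii), and verifying that the five substitutions interfere nontrivially only in the all-$\alpha$ case, after which the M\"obius-inverted sum cleanly telescopes to produce the $\dd$ factor.
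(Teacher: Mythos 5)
The paper offers no proof of this lemma: the zipping formula is attributed to Reading \cite[Theorem 4.6]{Reading} and used as a black box, and the only content the paper itself supplies is the passage from the zipping formula to the unzipping one via Proposition \ref{prop:unzip-edge sd}(1), which you also give. So you are reproving something the paper merely cites, and your sketch is correct and essentially self-contained. The strategy is the natural combinatorial one: compute $f_T(P)-f_T(\Z)$ via the contraction surjection $\pi:\sd(P)\to\sd(\Z)$, classify the fiber of a chain $C=L\cup\{z\}\cup U$ through $z$ by whether the minimal element of $U$ is above $x$, above only $y$, or above only $z$, and observe that only the first case (``all $\alpha$'') yields a non-singleton fiber, namely five preimages split $2,1,2$ across the rank sets $T_C$, $(T_C\setminus\{i\})\cup\{i+1\}$, $T_C\cup\{i+1\}$ where $i=r(y)$. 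This gives $f_T(P)-f_T(\Z)=|T\cap\{i,i+1\}|\cdot f_{T\cap[i-1]}([\hat 0,y])\cdot f_{(T\cap\{i+2,\dots,n\})-(i+1)}([x,\hat 1])$, and the M\"obius sum over $T\subseteq S$ factors over $\{i,i+1\}$ versus its complement; the $\{i,i+1\}$-factor $\sum_{T_2\subseteq S_2}(-1)^{|S_2|-|T_2|}|T_2|$ equals $1$ if $|S_2|=1$ and $0$ otherwise, which is exactly the coefficient pattern of $\dd=\aaa\bb+\bb\aaa$ at positions $i,i+1$. Uniqueness of the $\aaa\bb\to\cc\dd$ rewriting then lifts the $\aaa\bb$-identity to the stated $\cc\dd$-identity, and the unzipping formula follows by applying the zipping identity to $\Z(\U(P;x,y);x',y',y)=P$ with $[\hat 0,y']_{\U}\cong[\hat 0,y]_P$ and $[x',\hat 1]_{\U}\cong[x,\hat 1]_P$. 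Two small inaccuracies in the prose, neither of which affects the argument: the parenthetical ``(which also equals that of $x$)'' is wrong, since the strict down-set of $x$ is that of $y$ together with $\{y,z\}$ --- what you need is only that any chain $L$ of ranks $<i$ below $z$ is automatically below $y$ and $x$; and for the existence of the four $\cc\dd$-indices it is cleaner (and sufficient) to note that Gorenstein* implies Eulerian, intervals of Eulerian posets are Eulerian, and Eulerian posets have $\cc\dd$-indices by Bayer--Klapper, rather than invoke the stronger claim that intervals of Gorenstein* posets are Gorenstein*.
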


\section{$\cd$-indices of Gorenstein* posets of rank $5$}\label{sec:cd-5}

In this section,
we prove our first main result, Theorem \ref{thm:cd-rank5}.

We first recall the join of two posets.
Let $P$ and $Q$ be posets with $\hat 0$ and $\hat 1$.
The {\em join} $P*Q$ is the poset on the set $(P-\{\hat 1\}) \cup (Q -\{\hat 0\})$ with $x \leq y$ if either
(i) $x \leq y$ in $P$, (ii) $ x \leq y$ in $Q$, or (iii) $x \in P$ and $y \in Q$.
It is not hard to see that $\sd(P*Q)$ is the join of $\sd(P)$
and $\sd(Q)$ (as simplicial complexes).
Thus, if $P$ and $Q$ are Gorenstein* then so is $P*Q$.
The following formula was given in \cite[Lemma 1.1]{Stanley-cd}.

\begin{lemma}
\label{3.1}
If $P$ and $Q$ are Gorenstein* posets then
$\Phi_{P*Q}(\cc,\dd)=\Phi_P(\cc,\dd)\Phi_Q(\cc,\dd)$.
\end{lemma}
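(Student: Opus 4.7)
My plan is to prove the multiplicativity first at the level of the $\aaa\bb$-polynomial $\Psi$, and then pass to the $\cc\dd$-index by uniqueness of the substitution.

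Suppose $P$ has rank $n+1$ and $Q$ has rank $m+1$, so $P*Q$ has rank $n+m+1$. The key observation is that the rank function of $P*Q$ restricts to the rank function of $P$ on the $P$-part, and is shifted by $n$ on the $Q$-part: for $y\in Q-\{\hat 0_Q\}$, we have $r_{P*Q}(y)=n+r_Q(y)$. Consequently, for any subset $S\subseteq [n+m]$, write $S_P=S\cap [n]$ and $S_Q=\{s-n:s\in S,\ s>n\}\subseteq [m]$. A chain in $(P*Q)-\{\hat 0,\hat 1\}$ with rank set $S$ is precisely the concatenation of a chain in $P-\{\hat 0,\hat 1\}$ with rank set $S_P$ and a chain in $Q-\{\hat 0,\hat 1\}$ with rank set $S_Q$; these two sub-chains can be chosen independently because every element below $\hat 1_P$ is below every element above $\hat 0_Q$ in the join. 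Hence
$$f_S(P*Q)=f_{S_P}(P)\,f_{S_Q}(Q).$$

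Next I would plug this into the definition of the flag $h$-vector. Because the assignment $T\mapsto (T_P,T_Q)$ is a bijection between subsets of $S$ and pairs $(T_P\subseteq S_P,\ T_Q\subseteq S_Q)$, and $|S|-|T|=(|S_P|-|T_P|)+(|S_Q|-|T_Q|)$, the sum defining $h_S$ factors:
$$h_S(P*Q)=\sum_{T_P\subseteq S_P}\sum_{T_Q\subseteq S_Q}(-1)^{|S_P|-|T_P|}(-1)^{|S_Q|-|T_Q|}f_{T_P}(P)f_{T_Q}(Q)=h_{S_P}(P)\,h_{S_Q}(Q).$$
Now in the noncommutative polynomial ring $\ZZ\langle \aaa,\bb\rangle$, the monomial $u_S$ of degree $n+m$ attached to $S$ factors as $u_S=u_{S_P}\cdot u_{S_Q}$ (the first $n$ letters encode $S_P$ and the last $m$ letters encode $S_Q$), and every pair $(S_P,S_Q)$ with $S_P\subseteq[n]$, $S_Q\subseteq[m]$ arises uniquely this way. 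Multiplying the $\Psi$-polynomials and collecting,
$$\Psi_P(\aaa,\bb)\,\Psi_Q(\aaa,\bb)=\sum_{S_P,S_Q}h_{S_P}(P)h_{S_Q}(Q)\,u_{S_P}u_{S_Q}=\sum_{S\subseteq[n+m]}h_S(P*Q)\,u_S=\Psi_{P*Q}(\aaa,\bb).$$

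To finish, I invoke that $P$, $Q$, and $P*Q$ are all Gorenstein* (this was noted just before the statement, using the join description of $\sd(P*Q)$), so each of $\Psi_P,\Psi_Q,\Psi_{P*Q}$ lies in the image of the substitution $\cc=\aaa+\bb$, $\dd=\aaa\bb+\bb\aaa$, where it is represented by the $\cc\dd$-index. Because $\{\cc,\dd\}$ generates a free subalgebra of $\ZZ\langle\aaa,\bb\rangle$ (this is the content of the theorem of Bayer--Klapper making the $\cc\dd$-index well defined), the identity $\Psi_{P*Q}=\Psi_P\Psi_Q$ in $\aaa,\bb$ transfers verbatim to the identity $\Phi_{P*Q}(\cc,\dd)=\Phi_P(\cc,\dd)\Phi_Q(\cc,\dd)$ in $\cc,\dd$. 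There is no real obstacle here; the only point requiring care is the bookkeeping of ranks across the join in the first step, and the fact that products of $\cc\dd$-polynomials remain $\cc\dd$-polynomials, which is immediate.
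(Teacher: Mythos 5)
Your proof is correct, and since the paper does not prove Lemma \ref{3.1} but cites \cite[Lemma 1.1]{Stanley-cd}, your argument is essentially Stanley's standard derivation: factor the flag $f$- and hence flag $h$-vectors over the join, identify $\Psi_{P*Q}=\Psi_P\Psi_Q$ by splitting the $\aaa\bb$-monomial at the rank of $\hat 1_P$, and transfer to $\Phi$ via the freeness of the subalgebra generated by $\cc,\dd$. One tiny attribution nit: the freeness of $\ZZ\langle\cc,\dd\rangle\subset\ZZ\langle\aaa,\bb\rangle$ is an elementary fact about the grading; the Bayer--Klapper theorem is the \emph{existence} of a $\cc\dd$-expression for $\Psi_P$, not the uniqueness you are invoking here.
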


In the rest of this section,
we focus on Gorenstein* posets of rank $5$.
For a Gorenstein* poset of rank $5$,
we write its $\cd$-index in the form
$$\Phi_P(\cc,\dd)=\cc^4+ \alpha_1(P) \dd \cc^2 + \alpha_2(P) \cc\dd\cc + \alpha_3(P) \cc^2 \dd + \alpha_{13}(P) \dd^2.$$
We often use the following formula.
(We refer the readers for verification of the formula).
\begin{eqnarray}
\label{edgeformula}
f_{\{2,3\}}(P)=\alpha_{13}(P)+2(\alpha_1(P)+\alpha_2(P)+\alpha_3(P)) +4.
\end{eqnarray}

We first study necessary conditions of $\cd$-indices of Gorenstein* posets of rank $5$.
The following results were shown in \cite[Propositions 4.4 and 4.6]{Murai-Nevo-S*}.

\begin{lemma}
\label{3.2}
If $P$ is a Gorenstein* poset of rank $5$ then
$\alpha_{13}(P) \leq \alpha_1(P) \alpha_3(P)$.
\end{lemma}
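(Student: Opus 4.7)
My plan is to reduce the inequality to the trivial bound
$f_{\{1,4\}}(P)\leq f_{\{1\}}(P)\cdot f_{\{4\}}(P)$,
by first establishing the following companion to \eqref{edgeformula}:
\[
f_{\{1,4\}}(P) \;=\; 4 + 2\alpha_1(P) + 2\alpha_3(P) + \alpha_{13}(P).
\]
The crucial feature of this identity is that $\alpha_2(P)$ does not appear.

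To derive it, I will compute three entries of the flag $h$-vector directly from $\Phi_P(\cc,\dd)$. Writing $\cc=\aaa+\bb$ and $\dd=\aaa\bb+\bb\aaa$ and inspecting which of the five basis $\cd$-monomials in $\Phi_P$ contribute to each of the $\{\aaa,\bb\}$-monomials $\bb\aaa\aaa\aaa$, $\aaa\aaa\aaa\bb$, and $\bb\aaa\aaa\bb$, I find that each basis monomial contributes $0$ or $1$. In particular, $\cc\dd\cc$ contributes nothing to any of the three, because its middle two slots come from $\dd$ and cannot equal $\aaa\aaa$. This yields $h_{\{1\}}(P)=1+\alpha_1$, $h_{\{4\}}(P)=1+\alpha_3$, and $h_{\{1,4\}}(P)=1+\alpha_1+\alpha_3+\alpha_{13}$, and inversion via $f_S=\sum_{T\subseteq S}h_T$ then gives $\alpha_1=f_{\{1\}}(P)-2$, $\alpha_3=f_{\{4\}}(P)-2$, and the displayed identity for $f_{\{1,4\}}(P)$.

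These three identities combine algebraically to
\[
\alpha_1\alpha_3 - \alpha_{13} \;=\; f_{\{1\}}(P)\cdot f_{\{4\}}(P) \,-\, f_{\{1,4\}}(P),
\]
whose right-hand side counts ordered pairs $(y,w)\in P\times P$ with $r(y)=1$, $r(w)=4$, and $y\not<w$; this is manifestly non-negative, proving the lemma. The only step requiring any care is the coefficient extraction in the second paragraph, which is entirely routine; I anticipate no substantive obstacle.
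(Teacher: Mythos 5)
Your proof is correct. The coefficient extraction is accurate: $\cc\dd\cc$ indeed contributes $0$ to each of $\bb\aaa\aaa\aaa$, $\aaa\aaa\aaa\bb$, $\bb\aaa\aaa\bb$ (its two middle slots must be $\aaa\bb$ or $\bb\aaa$, never $\aaa\aaa$), while $\cc^4$, $\dd\cc^2$, $\cc^2\dd$, $\dd^2$ contribute $(1,1,0,0)$, $(1,0,1,0)$, $(1,1,1,1)$ respectively to the three $h$-entries, yielding $h_{\{1\}}=1+\alpha_1$, $h_{\{4\}}=1+\alpha_3$, $h_{\{1,4\}}=1+\alpha_1+\alpha_3+\alpha_{13}$. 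Inverting gives $f_{\{1\}}=2+\alpha_1$, $f_{\{4\}}=2+\alpha_3$, $f_{\{1,4\}}=4+2\alpha_1+2\alpha_3+\alpha_{13}$, hence $f_{\{1\}}f_{\{4\}}-f_{\{1,4\}}=\alpha_1\alpha_3-\alpha_{13}$, and the left side is the number of non-comparable pairs of an atom and a coatom, which is non-negative.

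The paper itself gives no proof of Lemma~\ref{3.2}: it cites \cite[Proposition 4.4]{Murai-Nevo-S*}, where a more general family of inequalities $\alpha_{ij}\leq\alpha_i\alpha_j$ is established. Your argument is therefore a genuinely self-contained alternative. What it buys is elementary transparency and, as a small bonus, a strictly weaker hypothesis: the only thing used is the existence of the $\cd$-index, so the inequality holds for every Eulerian poset of rank $5$, not just Gorenstein* ones; no acyclicity or Cohen--Macaulayness of the order complex enters. The trade-off is that the clean identity $f_{\{1\}}f_{\{4\}}-f_{\{1,4\}}=\alpha_1\alpha_3-\alpha_{13}$ is special to this pair of ranks; already for rank $6$ one finds $f_{\{2\}}=2+\alpha_1+\alpha_2$, so the analogous product $f_{\{2\}}f_{\{5\}}-f_{\{2,5\}}$ does not isolate $\alpha_2\alpha_4-\alpha_{24}$, and the cited proposition (which covers $\alpha_{24}\leq\alpha_2\alpha_4$ etc.) must use a different mechanism there. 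For the rank-$5$ case at hand, though, your route is complete and arguably preferable pedagogically.
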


\begin{lemma}
\label{3.3}
Let $P$ be a Gorenstein* poset of rank $5$  with $\alpha_2(P)=0$.
Then there are Gorenstein* posets $Q_1$ and $Q_2$ of rank $3$
such that $P=Q_1*Q_2$.
In particular, $\alpha_{13}(P) = \alpha_1(P) \alpha_3(P)$.
\end{lemma}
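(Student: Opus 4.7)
The plan is to reformulate the vanishing of $\alpha_2$ as a flag-enumerative identity, exploit it to force complete comparability between the two middle ranks of $P$, and then decompose $P$ as a join. I would first unpack the hypothesis: substituting $\cc=\aaa+\bb$ and $\dd=\aaa\bb+\bb\aaa$ into $\Phi_P(\cc,\dd)$ and reading off the coefficient of $\bb\bb\aaa\aaa$ yields $h_{\{1,2\}}(P)=1+\alpha_2(P)$. Since $[\hat 0,x]$ is a Gorenstein* interval of rank $2$ (its open part is a $0$-sphere), every rank-$2$ element of $P$ covers exactly two rank-$1$ elements, so $f_{\{1,2\}}(P)=2f_{\{2\}}(P)$. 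Plugging this into $h_{\{1,2\}}=f_{\{1,2\}}-f_{\{1\}}-f_{\{2\}}+1$ gives $\alpha_2(P)=f_{\{2\}}(P)-f_{\{1\}}(P)$, so $\alpha_2(P)=0$ forces $f_{\{1\}}(P)=f_{\{2\}}(P)$; dually, $f_{\{3\}}(P)=f_{\{4\}}(P)$.

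The heart of the proof is to upgrade these rank-size equalities to the statement that every rank-$2$ element of $P$ lies below every rank-$3$ element, equivalently $f_{\{2,3\}}(P)=f_{\{2\}}(P)f_{\{3\}}(P)$. Combining formula \eqref{edgeformula} with the identities above shows that this is equivalent to $\alpha_{13}(P)=\alpha_1(P)\alpha_3(P)$. The task is therefore to force equality in the bound $\alpha_{13}\le\alpha_1\alpha_3$ of Lemma \ref{3.2} under the extra hypothesis $\alpha_2=0$; I expect this to be the main obstacle, as Lemma \ref{3.2} alone gives only the inequality, and extracting its equality case will require a finer argument --- presumably via a Stanley--Reisner ring analysis of $\sd(P)$ along the lines of the referenced propositions in \cite{Murai-Nevo-S*}, showing that the vanishing $h_{\{1,2\}}=1$ forces the algebraic map whose injectivity witnesses Lemma \ref{3.2} to also be surjective.

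Once complete comparability between ranks $2$ and $3$ is established, the graded structure of $P$ immediately gives that every element of rank $\le 2$ lies below every element of rank $\ge 3$. Defining $Q_1$ to be the subposet of $P$ on $\{\hat 0\}\cup\{\text{rank-}1,2\text{ elements}\}$ together with a new top, and $Q_2$ dually on the upper half, we have $P=Q_1*Q_2$ by construction. Since $\sd(P)=\sd(Q_1)*\sd(Q_2)$ is Gorenstein* and the join of two simplicial complexes is Gorenstein* if and only if each factor is, each $\sd(Q_i)$ is a Gorenstein* $1$-complex, so each $Q_i$ is Gorenstein* of rank $3$. Lemma \ref{3.1} then yields $\Phi_P=\Phi_{Q_1}\Phi_{Q_2}=(\cc^2+\alpha_1(P)\dd)(\cc^2+\alpha_3(P)\dd)$, from which $\alpha_{13}(P)=\alpha_1(P)\alpha_3(P)$ is immediate.
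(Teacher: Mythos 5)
Your setup is clean and correct: the identity $h_{\{1,2\}}(P)=1+\alpha_2(P)$, combined with $f_{\{1,2\}}(P)=2f_{\{2\}}(P)$ (since $[\hat 0,x]$ is a rank-$2$ Gorenstein* interval for every rank-$2$ element $x$), does give $\alpha_2(P)=f_{\{2\}}(P)-f_{\{1\}}(P)$; and given $\alpha_2=0$, the equality $f_{\{2,3\}}(P)=f_{\{2\}}(P)f_{\{3\}}(P)$ is indeed equivalent to $\alpha_{13}(P)=\alpha_1(P)\alpha_3(P)$ by \eqref{edgeformula} and $f_{\{i\}}(P)=2+\alpha_1+\dots$. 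Your final step, building $Q_1,Q_2$ once full comparability between ranks $2$ and $3$ is known, is also fine.

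However, the proof has a genuine gap at its core: you reduce the lemma to the single statement ``$\alpha_2(P)=0$ forces $\alpha_{13}(P)=\alpha_1(P)\alpha_3(P)$'' and then do not prove it. You even flag this yourself (``I expect this to be the main obstacle\dots presumably via a Stanley--Reisner ring analysis\dots''). That hand-wave is where the entire content of the lemma sits, so what you have is not a proof but a reformulation. The direction is also backwards relative to what is actually available: the coefficient identity is the \emph{consequence} of the join decomposition (via Lemma \ref{3.1}), not a convenient lemma one proves first.

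The route that actually closes the gap uses your equality $f_{\{1\}}(P)=f_{\{2\}}(P)$ combinatorially rather than algebraically. Consider, as in the proof of Lemma \ref{3.4}, the rank-selected subposet $Q=\{\sigma\in P: r(\sigma)\in\{1,2\}\}$: it is the face poset of a graph $\Gamma$ which is a union of cycles (since each open interval $(\hat 0,x)$ for a rank-$3$ element $x$ is a cycle), and $\Gamma$ is connected because $Q$ is Cohen--Macaulay. A connected graph that is a union of cycles and has $f_{\{1\}}=f_{\{2\}}$ vertices and edges must be a single simple cycle $C$. Hence for every rank-$3$ element $x$, the cycle $(\hat 0,x)$ is contained in the simple cycle $C$ and therefore equals $C$, i.e.\ every rank-$3$ element lies above every rank-$2$ element. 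This gives the join decomposition directly, and then $\alpha_{13}=\alpha_1\alpha_3$ follows from Lemma \ref{3.1}, exactly as the ``In particular'' suggests.
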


The next result gives a new restriction on $\cd$-indices.

\begin{lemma}
\label{3.4}
Let $P$ be a Gorenstein* poset of rank $5$.
If $\alpha_2(P)=1$
then
there are non-negative integers $b_1,b_2,b_3,c_1,c_2,c_3$ such that
$\alpha_1(P)=b_1+b_2+b_3$,
$\alpha_3(P)=c_1+c_2+c_3$ and $\alpha_{13}(P)=\alpha_1(P)\alpha_3(P) - (b_1 c_1 + b_2 c_2 + b_3c_3)$.
\end{lemma}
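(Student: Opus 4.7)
Write $B$ and $C$ for the sets of rank-$2$ and rank-$3$ elements of $P$. The plan is to recast the conclusion as a decomposition statement about the bipartite incomparability graph between $B$ and $C$, and then establish that decomposition via the Gorenstein* structure.

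First, I would read off cardinalities from the $\cc\dd$-index. Expanding $\Phi_P$ in $\aaa,\bb$, one finds $h_{\{1\}}(P) = 1+\alpha_1$, $h_{\{1,2\}}(P) = 1+\alpha_2$, and their duals; combined with $f_{\{1,2\}} = 2|B|$ (every rank-$2$ element covers exactly two atoms in a Gorenstein* poset of rank $5$, since $[\hat 0, y]$ is then a rank-$2$ Gorenstein* interval), this gives $|B| = 2+\alpha_1+\alpha_2$ and $|C| = 2+\alpha_2+\alpha_3$. Under $\alpha_2(P) = 1$ we get $|B| = 3+\alpha_1$ and $|C| = 3+\alpha_3$, and then \eqref{edgeformula} yields
\[
|B|\cdot|C| - f_{\{2,3\}}(P) = 3 + \alpha_1 + \alpha_3 + N, \qquad N := \alpha_1\alpha_3 - \alpha_{13},
\]
with $N \geq 0$ by Lemma~\ref{3.2}. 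It now suffices to exhibit partitions $B = B_1\sqcup B_2 \sqcup B_3$ and $C = C_1\sqcup C_2 \sqcup C_3$, all blocks nonempty, such that for $y\in B$, $z\in C$ one has $y\not<z$ precisely when $y$ and $z$ lie in parts of equal index: setting $b_i := |B_i|-1$ and $c_i := |C_i|-1$, expansion of $\sum_{i=1}^{3} (1+b_i)(1+c_i) = 3 + \sum b_i + \sum c_i + \sum b_i c_i$ delivers the required identities.

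The crux is therefore this three-block partition of the bipartite incomparability graph $\bar G \subseteq B\times C$. My first attempt would be to show $\bar G$ is $P_4$-free (as a bipartite graph), i.e., whenever $y\not<z$, $y\not<z'$, and $y'\not<z$, one also has $y'\not<z'$. The Gorenstein* structure supplies that each $[\hat 0, z]$ is a $2p_z$-gon and each $[y, \hat 1]$ is a $2q_y$-gon; from the atom/coatom cycle structure of the four polygonal intervals arising from an induced $P_4$, combined with the numerical identity $\alpha_2(P) = |B|-|A| = 1$, I would aim to derive a contradiction. Once $\bar G$ is a disjoint union of $k$ nonempty complete bipartite subgraphs, compatibility with $|B| = 3+\alpha_1$ and $|C| = 3+\alpha_3$ pins down $k = 3$ exactly.

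The main obstacle will be this $P_4$-freeness claim, since $\alpha_2(P)=1$ is a single global numerical constraint and the Gorenstein* condition is topological rather than purely order-theoretic. My backup plan is zipping via Lemma~\ref{obs:zip cd-index}: any zip at a cover $y<x$ with $r(y)=2$, $r(x)=3$ lowers $\alpha_2$ by one and leaves $\alpha_1, \alpha_3, \alpha_{13}$ unchanged, so when such a zip is legal on $P$, the quotient $P'$ satisfies $\alpha_2(P')=0$, is a join by Lemma~\ref{3.3}, and one immediately concludes $\alpha_{13}(P) = \alpha_1\alpha_3$ with the trivial choice of $b_i, c_i$ making $\sum b_i c_i = 0$. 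Otherwise, the combinatorial obstruction to any legal rank-$2$/rank-$3$ zip should itself reveal the nontrivial three-block structure accounting for the case $\sum b_i c_i > 0$.
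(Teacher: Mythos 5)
Your numerical reformulation is correct and efficient: with $\alpha_2(P)=1$ you correctly get $|B|=f_{\{2\}}=3+\alpha_1$, $|C|=f_{\{3\}}=3+\alpha_3$, and via \eqref{edgeformula} the identity $|B||C|-f_{\{2,3\}}(P)=3+\alpha_1+\alpha_3+N$ with $N=\alpha_1\alpha_3-\alpha_{13}$, so the lemma is equivalent to exhibiting partitions $B=B_1\sqcup B_2\sqcup B_3$, $C=C_1\sqcup C_2\sqcup C_3$ into nonempty blocks with $y\not<z$ exactly when $y,z$ lie in equally-indexed blocks. This is indeed what the paper's argument produces, with $B_i$ the edge sets of the three internally disjoint arcs of a theta graph and $C_i$ the sets of rank-$3$ elements whose lower interval is the cycle omitting the $i$-th arc.

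However, the crux of the lemma --- establishing that three-block structure --- is not proved in your proposal; you flag it yourself as ``the main obstacle.'' Two concrete issues. First, the $P_4$-freeness of the incomparability graph does not follow from the single numerical constraint $\alpha_2=1$ together with the local fact that each interval $[\hat 0,z]$ and $[y,\hat 1]$ is polygonal; the paper's proof relies essentially on a \emph{topological} input you do not invoke, namely that the rank-selected subposets $\{\sigma: r(\sigma)\in\{1,2\}\}$ and $\{\sigma: r(\sigma)\in\{3,4\}\}$ are Cohen--Macaulay, hence the associated $1$-dimensional CW-complexes $\Gamma$ and $\Gamma'$ are \emph{connected}. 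Connectivity of $\Gamma$, plus ``union of cycles'' and the Euler-type count $e=v+1$, forces $\Gamma$ to be two simple cycles meeting in a point or a path (a theta graph); connectivity of the dual $\Gamma'$ then rules out the point case and also gives $c_i\geq 1$. Without connectivity, your $P_4$-freeness claim would fail: e.g.\ two disjoint polygons give $\alpha_2\neq 1$ but, more to the point, nothing in an argument from $\alpha_2=1$ alone and local cycle structure prevents unwanted configurations; you would need to smuggle in connectivity somewhere. Second, even granting $P_4$-freeness, you have not argued that the number of complete-bipartite components is exactly $3$ (not $0$, $1$, $2$, or more), which your arithmetic requires. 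Your fallback via zipping is also only a sketch: the case where no legal rank-$2$/rank-$3$ zip exists is exactly the hard case, and ``the obstruction should itself reveal the three-block structure'' is a hope, not a proof. In short, the reduction is sound and the target structure is the right one, but the argument that this structure actually holds --- the entire content of the lemma --- is missing; the paper supplies it by a quite different route (CW-complex connectivity from Cohen--Macaulayness plus elementary graph theory on a cyclomatic-number-$2$ graph).
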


\begin{proof}
Consider the subposet $Q=\{\sigma \in P: r(\sigma)  \in \{1,2\}\}$,
where $r : P \to \ZZ_{\geq 0}$ is the rank function of $P$.
Since $(\hat 0,x )$ is the face poset of a cycle for any rank $3$ element $x \in P$,
the poset $Q$ is the face poset of a CW-complex $\Gamma$ which is a union of cycles.
Also, $Q$ is a Cohen-Macaulay poset since $Q$ is a rank selected subposet of $P$
(cf.\ \cite[III, Theorem 4.5]{StanleyGreenBook}),
which implies that the CW-complex $\Gamma$ is connected.
Similarly, let $Q'$ be the dual poset of $\{\sigma \in P: r(\sigma) \in \{3,4\}\}$.
Then, by considering the dual of the above argument,
it follows that $Q'$ is also the face poset of a connected CW-complex $\Gamma'$ which is a union of cycles.

Since $f_{\{1\}}(P)=2+\alpha_1(P)$ and $f_{\{2\}}(P)=2+\alpha_1(P)+\alpha_2(P)$,
$\alpha_2(P)=1$ implies that the number of edges in $\Gamma$ is equal to the number of vertices in $\Gamma$ plus $1$.
Since $\Gamma$ is a union of cycles,
this fact shows that $\Gamma$ is a union of two simple cycles $C$ and $C'$
such that the intersection of $C$ and $C'$ is either a point or a nontrivial simple path (namely, a simple path with at least one edge).

Suppose $C \cap C'$ is a point.
Then $C$ and $C'$ are the only simple cycles in $\Gamma$.
Thus,
for each rank $3$ element $x \in P$,
$(\hat 0,x)$ is equal to the face poset of $C$ or that of $C'$.
Then, for each rank $4$ element $y \in P$,
$y$ must cover exactly two elements $x$ and $x'$ with $(\hat 0,x)=(\hat 0,x')$
since $P$ is Gorenstein*.
However, this fact shows that $\Gamma'$ has two connected components,
which contradicts the connectedness of $\Gamma'$.
Hence $C \cap C'$ is a nontrivial simple path.

Let $C''$ be the cycle in $\Gamma$ obtained from $C \cup C'$ by removing the interior of $C \cap C'$.
Then, $C,C'$ and $C''$ are the only simple cycles in $\Gamma$,
and for each rank $3$ element $x \in P$,
$(\hat 0,x)$ must coincide with $C,C'$ or $C''$.
Let $c_1,c_2,c_3$ be the number of rank $3$ elements $x \in P$ such that
$(\hat 0,x)$ coincides with $C,C',C''$ respectively.
Let $b_1,b_2,b_3$ be the \emph{lengths} of the simple paths $C'\cap C'',C\cap C'',C\cap C'$ respectively, by means of number of edges. Clearly $b_i\geq 1$.
Also,
since $\Gamma'$ is connected,
by using the same argument as when we concluded that $C\cap C'$ is a nontrivial simple path,
we have $c_i\geq 1$.

Then, 
we have
\begin{align*}
f_{\{2,3\}}(P) &=
c_1(b_2+b_3)+c_2(b_1+b_3)+c_3(b_1+b_2)\\
&= (b_1+b_2+b_3)(c_1+c_2+c_3)-(b_1c_1+b_2c_2+b_3c_3).
\end{align*}
Set $b_i'=b_i-1$ and $c_i'=c_i-1$ for $i=1,2,3$.
Then,
since $\alpha_2(P)=1$,
$\alpha_1(P)=f_{\{2\}}(P)-3=b'_1+b'_2+b'_3$
and $\alpha_3(P)=f_{\{3\}}(P)-3=c'_1+c'_2+c'_3$.
By using \eqref{edgeformula} and the above equation,
a routine computation shows
$\alpha_{13}(P)=\alpha_1(P)\alpha_3(P) - (b'_1 c'_1 + b'_2 c'_2 + b'_3c'_3),$
as desired.
\end{proof}

Now we prove Theorem \ref{thm:cd-rank5}.
In the proof,
we use the following notation.
Let $P$ be a Gorenstein* poset and let $\sigma$ and $\tau$ be elements of $P \setminus\{\hat 0,\hat 1\}$ such that $\sigma$ covers $\tau$.
We say that $Q$ is obtained from $P$ by unzipping $(\sigma,\tau)$ $k$ times if $Q$ is obtained by the following successive process:
First, unzip $(\sigma,\tau)$ and consider $P'=\U(P;\sigma,\tau)$.
This unzipping creates new elements $\sigma'$ and $\tau'$ such that $\sigma'$ covers $\tau'$ in $P'$.
Next, unzip $(\sigma',\tau')$ in  $P'$,
and consider $P''=\U(P';\sigma',\tau')$.
Again, we obtain new elements $\sigma''$ and $\tau''$ such that $\sigma''$ covers $\tau''$ in $P''$,
and continue this procedure $k$ times.
Note that,
by Lemma \ref{obs:zip cd-index},
we have
$\Phi_Q(\cc,\dd)=\Phi_P (\cc,\dd) + k\cdot \Phi_{[\hat{0},\tau]}(\cc,\dd)\cdot \dd \cdot \Phi_{[\sigma,\hat{1}]}(\cc,\dd).$

\begin{proof}[Proof of Theorem \ref{thm:cd-rank5}]
The necessity follows from Lemmas \ref{3.2}, \ref{3.3} and \ref{3.4}.
We prove the sufficiency.
Let $C_k$ be the Gorenstein* poset of  rank $3$ corresponding to a cycle of length $k$
(i.e.\ $C_k -\{\hat 0,\hat 1\}$ is the face poset of a cycle of length $k$).
\medskip

(i)
Observe that $\Phi_{C_k}(\cc,\dd)=\cc^2 + (k-2) \dd$.
Then,
for all non-negative integers $\alpha_1$ and $\alpha_3$,
the join $C_{\alpha_1+2}* C_{\alpha_3+2}$ is a Gorenstein* poset of rank $5$ with
the desired $\cd$-index
$$
\Phi_{C_{\alpha_1+2}* C_{\alpha_3+2}}(\cc,\dd)
= \Phi_{C_{\alpha_1+2}} (\cc,\dd) \cdot \Phi_{C_{\alpha_3+2}}(\cc,\dd)
= \cc^4 + \alpha_1 \dd\cc^2+\alpha_3 \cc^2 \dd + \alpha_{1}\alpha_3 \dd^2.
$$

(ii)
Let $Q=\hat{B}_2 * C_3*\hat{B}_2$ described in figure (a),
where $\hat{B}_2$ is the Boolean algebra of rank $2$.
Note that $\Phi_Q(\cc,\dd)=\cc^4+ \cc\dd\cc$ by Lemma \ref{3.1}.

Let $R$ be the Gorenstein* poset obtained from $Q$ by unzipping
$(\tau_i,\rho)$ $b_i$ times for $i=1,2,3$ and by
unzipping $(\pi,\sigma_i)$ $c_i$ times for $i=1,2,3$.
We claim that $R$ has the desired $\cd$-index.
By Lemma \ref{obs:zip cd-index},
$\alpha_1(R)=b_1+b_2+b_3$,
$\alpha_2(R)=1$
and
$\alpha_3(R)=c_1+c_2+c_3$.
It remains to prove $\alpha_{13}(R)=\alpha_1(R)\alpha_3(R)-(b_1c_1+b_2c_2+b_3c_3)$.
\begin{center}
\unitlength 0.1in
\begin{picture}( 50.6900, 20.2200)( 25.2100,-32.1500)
%
\special{pn 8}%
\special{pa 3722 2402}%
\special{pa 4002 2852}%
\special{fp}%
%
\special{pn 8}%
\special{pa 3440 2852}%
\special{pa 3160 2402}%
\special{fp}%
\special{pa 3160 2402}%
\special{pa 3992 2852}%
\special{fp}%
\put(29.7100,-19.5200){\makebox(0,0){$\sigma_1$}}%
\put(35.3300,-19.5200){\makebox(0,0){$\sigma_2$}}%
\put(44.7000,-19.5200){\makebox(0,0){$\sigma_3$}}%
\put(44.7000,-24.0100){\makebox(0,0){$\tau_1$}}%
\put(35.3300,-24.0100){\makebox(0,0){$\tau_2$}}%
\put(29.7100,-24.0100){\makebox(0,0){$\tau_3$}}%
\put(32.5200,-15.0300){\makebox(0,0){$\pi$}}%
\put(32.5200,-28.5100){\makebox(0,0){$\rho$}}%
%
\special{pn 8}%
\special{pa 4284 2402}%
\special{pa 4284 1952}%
\special{fp}%
\special{pa 4284 1952}%
\special{pa 3722 2402}%
\special{fp}%
\special{pa 3722 2402}%
\special{pa 3160 1952}%
\special{fp}%
\special{pa 3160 1952}%
\special{pa 3160 2402}%
\special{fp}%
\special{pa 3160 2402}%
\special{pa 3722 1952}%
\special{fp}%
\special{pa 3722 1952}%
\special{pa 4284 2402}%
\special{fp}%
%
\special{pn 8}%
\special{pa 4264 2402}%
\special{pa 4002 2852}%
\special{fp}%
%
\special{pn 8}%
\special{pa 3722 2402}%
\special{pa 3440 2852}%
\special{fp}%
\special{pa 3440 2852}%
\special{pa 4284 2402}%
\special{fp}%
%
\special{pn 8}%
\special{pa 4284 1952}%
\special{pa 4002 1504}%
\special{fp}%
\special{pa 4002 1504}%
\special{pa 3722 1952}%
\special{fp}%
\special{pa 3722 1952}%
\special{pa 3440 1504}%
\special{fp}%
\special{pa 3440 1504}%
\special{pa 3160 1952}%
\special{fp}%
%
\special{pn 8}%
\special{pa 3160 1952}%
\special{pa 4002 1504}%
\special{fp}%
%
\special{pn 8}%
\special{pa 4284 1952}%
\special{pa 3440 1504}%
\special{fp}%
%
\special{pn 8}%
\special{pa 3722 3076}%
\special{pa 4002 2852}%
\special{fp}%
\special{pa 3722 3076}%
\special{pa 3440 2852}%
\special{fp}%
\special{pa 3722 1278}%
\special{pa 3440 1504}%
\special{fp}%
\special{pa 4002 1504}%
\special{pa 3722 1278}%
\special{fp}%
%
\special{pn 8}%
\special{sh 0}%
\special{ar 3722 1278 46 38  0.0000000 6.2831853}%
%
\special{pn 8}%
\special{sh 0}%
\special{ar 3722 1952 46 36  0.0000000 6.2831853}%
%
\special{pn 8}%
\special{sh 0}%
\special{ar 4284 1952 46 36  0.0000000 6.2831853}%
%
\special{pn 8}%
\special{sh 0}%
\special{ar 4284 2402 46 38  0.0000000 6.2831853}%
%
\special{pn 8}%
\special{sh 0}%
\special{ar 3722 2402 46 38  0.0000000 6.2831853}%
%
\special{pn 8}%
\special{sh 0}%
\special{ar 3160 2402 48 38  0.0000000 6.2831853}%
%
\special{pn 8}%
\special{sh 0}%
\special{ar 3160 1952 48 38  0.0000000 6.2831853}%
%
\special{pn 8}%
\special{sh 0}%
\special{ar 3722 3076 46 36  0.0000000 6.2831853}%
%
\special{pn 8}%
\special{sh 0}%
\special{ar 4002 2852 46 38  0.0000000 6.2831853}%
%
\special{pn 8}%
\special{sh 0}%
\special{ar 3440 2852 48 38  0.0000000 6.2831853}%
%
\special{pn 8}%
\special{sh 0}%
\special{ar 3440 1504 48 38  0.0000000 6.2831853}%
%
\special{pn 8}%
\special{sh 0}%
\special{ar 4002 1504 46 38  0.0000000 6.2831853}%
\put(37.2100,-33.0000){\makebox(0,0){figure (a)}}%
\put(39.0800,-30.7600){\makebox(0,0){$\hat 0$}}%
\put(39.0800,-12.7800){\makebox(0,0){$\hat 1$}}%
%
\special{pn 8}%
\special{pa 5558 2402}%
\special{pa 5558 1952}%
\special{fp}%
\special{pa 5558 1952}%
\special{pa 6120 2402}%
\special{fp}%
\special{pa 6120 2402}%
\special{pa 6120 1952}%
\special{fp}%
\special{pa 6120 1952}%
\special{pa 6682 2402}%
\special{fp}%
\special{pa 6682 2402}%
\special{pa 6682 1952}%
\special{fp}%
\special{pa 6682 1952}%
\special{pa 7244 2402}%
\special{fp}%
\special{pa 7244 1952}%
\special{pa 5558 2402}%
\special{fp}%
\special{pa 7244 2402}%
\special{pa 7244 1952}%
\special{fp}%
\special{pa 7244 1952}%
\special{pa 6870 1504}%
\special{fp}%
\special{pa 6870 1504}%
\special{pa 6682 1952}%
\special{fp}%
\special{pa 6682 1952}%
\special{pa 5932 1504}%
\special{fp}%
\special{pa 5932 1504}%
\special{pa 7244 1952}%
\special{fp}%
\special{pa 6120 1952}%
\special{pa 5932 1504}%
\special{fp}%
\special{pa 5932 1504}%
\special{pa 5558 1952}%
\special{fp}%
\special{pa 5558 1952}%
\special{pa 6870 1504}%
\special{fp}%
\special{pa 6870 1504}%
\special{pa 6120 1952}%
\special{fp}%
%
\special{pn 8}%
\special{pa 6870 2852}%
\special{pa 7244 2402}%
\special{fp}%
\special{pa 7244 2402}%
\special{pa 5932 2852}%
\special{fp}%
\special{pa 5932 2852}%
\special{pa 6682 2402}%
\special{fp}%
\special{pa 6682 2402}%
\special{pa 6870 2852}%
\special{fp}%
\special{pa 6870 2852}%
\special{pa 6120 2402}%
\special{fp}%
\special{pa 6120 2402}%
\special{pa 5932 2852}%
\special{fp}%
\special{pa 5932 2852}%
\special{pa 5558 2402}%
\special{fp}%
\special{pa 5558 2402}%
\special{pa 6870 2852}%
\special{fp}%
%
\special{pn 8}%
\special{sh 0}%
\special{ar 5558 2402 48 38  0.0000000 6.2831853}%
%
\special{pn 8}%
\special{sh 0}%
\special{ar 6120 2402 48 38  0.0000000 6.2831853}%
%
\special{pn 8}%
\special{sh 0}%
\special{ar 6682 2402 48 38  0.0000000 6.2831853}%
%
\special{pn 8}%
\special{sh 0}%
\special{ar 7244 2402 48 38  0.0000000 6.2831853}%
%
\special{pn 8}%
\special{sh 0}%
\special{ar 7244 1952 48 38  0.0000000 6.2831853}%
%
\special{pn 8}%
\special{sh 0}%
\special{ar 6682 1952 48 38  0.0000000 6.2831853}%
%
\special{pn 8}%
\special{sh 0}%
\special{ar 6120 1952 48 38  0.0000000 6.2831853}%
%
\special{pn 8}%
\special{sh 0}%
\special{ar 5558 1952 48 38  0.0000000 6.2831853}%
\put(57.4500,-15.0300){\makebox(0,0){$\pi$}}%
\put(57.4500,-28.5100){\makebox(0,0){$\rho$}}%
\put(53.7000,-24.0100){\makebox(0,0){$\tau_1$}}%
\put(53.7000,-19.5200){\makebox(0,0){$\sigma_1$}}%
\put(59.3200,-19.5200){\makebox(0,0){$\sigma_2$}}%
\put(64.9400,-19.5200){\makebox(0,0){$\sigma_3$}}%
\put(74.3100,-19.5200){\makebox(0,0){$\sigma_4$}}%
\put(59.3200,-24.0100){\makebox(0,0){$\tau_2$}}%
\put(64.9400,-24.0100){\makebox(0,0){$\tau_3$}}%
\put(74.3100,-24.0100){\makebox(0,0){$\tau_4$}}%
%
\special{pn 8}%
\special{pa 6400 3076}%
\special{pa 6870 2852}%
\special{fp}%
%
\special{pn 8}%
\special{pa 6870 1504}%
\special{pa 6400 1278}%
\special{fp}%
\special{pa 6400 1278}%
\special{pa 5932 1504}%
\special{fp}%
%
\special{pn 8}%
\special{pa 5932 2852}%
\special{pa 6400 3076}%
\special{fp}%
%
\special{pn 8}%
\special{sh 0}%
\special{ar 6400 1278 46 38  0.0000000 6.2831853}%
%
\special{pn 8}%
\special{sh 0}%
\special{ar 6400 3076 48 38  0.0000000 6.2831853}%
%
\special{pn 8}%
\special{sh 0}%
\special{ar 6870 1504 46 38  0.0000000 6.2831853}%
%
\special{pn 8}%
\special{sh 0}%
\special{ar 5932 1504 46 38  0.0000000 6.2831853}%
%
\special{pn 8}%
\special{sh 0}%
\special{ar 5932 2852 48 38  0.0000000 6.2831853}%
%
\special{pn 8}%
\special{sh 0}%
\special{ar 6870 2852 48 38  0.0000000 6.2831853}%
\put(64.1200,-32.9800){\makebox(0,0){figure (b)}}%
\end{picture}%
\medskip
\end{center}

For $i=1,2,3$,
let $B_i$ be the set of rank $2$ elements of $R$ that consists of $\tau_i$
and elements which are added when we unzip $(\tau_i,\rho)$ $b_i$ times,
and define the set $C_i$ of rank $3$ elements of $R$ with $\sigma_i \in C_i$ similarly.
Then,
since each element of $C_i$ exactly covers all rank $2$ elements which are not in $B_i$,
\begin{align*}
f_{\{2,3\}}(R)& \!=\! |C_1|(|B_2|+|B_3|)+|C_2|(|B_1|+|B_3|)+|C_3|(|B_1|+|B_2|)\\
&\! =\!  (|B_1|\!+\!|B_2|\!+\!|B_3|)(|C_1|\!+\!|C_2|\!+\!|C_3|)-(|B_1||C_1|\!+\!|B_2||C_2|\!+\!|B_3||C_3|).
\end{align*}
Observe $b_i=|B_i|-1$ and $c_i=|C_i|-1$ for $i=1,2,3$.
Then, in the same way as in the proof of Lemma \ref{3.4},
\eqref{edgeformula} and routine computations guarantee
$\alpha_{13}(R)=\alpha_1(R)\alpha_3(R)-(b_1c_1+b_2c_2+b_3c_3)$.
\medskip

(iii)
Let $\alpha_1,\alpha_2,\alpha_3$ and $\alpha_{13}$ be non-negative integers with $\alpha_2 \geq 2$ and $\alpha_{13} \leq \alpha_1\alpha_3$.
Let $Q'=\hat{B}_2* C_4*\hat{B}_2$ described in figure (b).
Note that $\Phi_Q(\cc,\dd) = \cc^4 + 2 \cc\dd\cc$.
\medskip

{\em Case 1.}
Suppose $\alpha_1=0$. Then $\alpha_{13} =0$ by the assumption.
Let $P$ be the Gorenstein* poset obtained from $Q'$ by unzipping $(\pi,\sigma_1)$  $\alpha_3$ times and by
unzipping $(\sigma_1,\tau_1)$ $(\alpha_2 -2)$ times.
Lemma \ref{obs:zip cd-index} shows that $P$ has the desired $\cd$-index
$\cc^4+\alpha_2\cc\dd\cc+\alpha_3 \cc^2\dd.$

{\em Case 2.}
Suppose $\alpha_1>0$ and $\alpha_{13} \leq \alpha_1$.
Let $R'$ be the Gorenstein* poset obtained from $Q'$ by applying the following unzipping.
\begin{itemize}
\item[(U1)]
unzip $(\tau_1,\rho)$ $\alpha_{13}$ times;
\item[(U2)]
unzip $(\tau_4,\rho)$ $(\alpha_1-\alpha_{13})$ times;
\item[(U3)]
unzip $(\pi,\sigma_1)$;
\item[(U4)]
unzip $(\pi,\sigma_2)$ $(\alpha_3-1)$ times.
\end{itemize}
By Lemma \ref{obs:zip cd-index},
$\alpha_1(R')=\alpha_{13} +(\alpha_1-\alpha_{13})=\alpha_1$,
$\alpha_2(R')=2$
and
$\alpha_3(R')=1 + (\alpha_3 -1)=\alpha_3$.
We claim $\alpha_{13}(R')=\alpha_{13}$.

Let $B_1$ be the set of rank 2 elements of $R'$ consisting of $\tau_1$ and elements which are added by unzipping (U1),
and let $B_4$ be that consisting of $\tau_4$ and elements added by (U2).
Similarly,
let $C_1$ be the set of rank 3 elements of $R'$ consisting of $\sigma_1$ and the element added by (U3),
and let $C_2$ be that consisting of $\tau_2$ and elements added by (U4).
Finally, let $B_2=\{\tau_2\}$, $B_3=\{\tau_3\}$,
$C_3=\{\sigma_3\}$ and $C_4=\{\sigma_4\}$.

Then $B_1 \cup \cdots \cup B_4$ and $C_1 \cup \cdots \cup C_4$ are partitions of the sets of elements of $R'$ of ranks $2$ and $3$
respectively.
Also, elements in $C_i$ exactly cover elements in $B_i$ and $B_{i+1}$, where we consider $B_5=B_1$.
This implies
\begin{align*}
f_{\{2,3\}}(R) &= |C_1|(|B_1|\!+\!|B_2|)\!+\!|C_2|(|B_2|\!+\!|B_3|)\!+\!|C_3|(|B_3|\!+\!|B_4|)\!+\!|C_4|(|B_4|\!+\!|B_1|)\\
&=2 \cdot (\alpha_{13}+2) + \alpha_3 \cdot 2 + 1 \cdot (\alpha_1 -\alpha_{13}+2) + 1\cdot (\alpha_1+2) \\
&=\alpha_{13}+2(\alpha_1+2+\alpha_3)+4.
\end{align*}
Hence $\alpha_{13}(R)=\alpha_{13}$ by \eqref{edgeformula}.

Finally, consider the Gorenstein* poset $P$ obtained from $R'$
by unzipping $(\sigma_1,\tau_1)$ $(\alpha_2-2)$ times.
Then Lemma \ref{obs:zip cd-index} show that $P$ has the desired $\cd$-index
$$\Phi_{P}(\cc,\dd)=\Phi_{R'}(\cc,\dd)+ (\alpha_2-2) \cc\dd\cc=
\cc^4+ \alpha_1 \dd \cc^2 + \alpha_2 \cc\dd\cc + \alpha_3 \cc^2 \dd + \alpha_{13} \dd^2.$$

{\em Case 3.}
Suppose $\alpha_1>0$ and $\alpha_{13} > \alpha_1$.
Recall $\alpha_{13} \leq \alpha_1 \alpha_3$.
Let $\beta \leq \alpha_3$ be the integer satisfying
$$\alpha_1 (\beta-1) < \alpha_{13} \leq \alpha_1 \beta $$
and let
$$p= \alpha_1 \beta  - \alpha_{13}.$$
Note that $\beta \geq 2$ and $0 \leq p < \alpha_1$.
Let $R'$ be the Gorenstein* poset obtained from $Q'$ by applying the following unzipping.
\begin{itemize}
\item[(U1)] unzip $(\tau_1,\rho)$ $(\alpha_1-p)$ times
and unzip $(\tau_2,\rho)$ $p$ times;
\item[(U2)]
unzip $(\pi,\sigma_1)$ $(\beta-1)$ times,
unzip $(\pi,\sigma_3)$ $(\alpha_3-\beta)$ times, and
unzip $(\pi,\sigma_4)$.
\end{itemize}
Then, Lemma \ref{obs:zip cd-index} shows
$\alpha_1(R')=\alpha_1$,
$\alpha_2(R')=2$ and $\alpha_3(R')=\alpha_3$.
Also, a computation similar to Case 2 shows
\begin{align*}
f_{\{2,3\}}(R) &= \beta \cdot (\alpha_{1}+2) + 1 \cdot (p+2)  + (\alpha_3-\beta+1)\cdot 2 + 2 \cdot (\alpha_1-p+2) \\
&= (\beta\alpha_1-p) +2(\alpha_3+2 +\alpha_1) + 4.
\end{align*}
Since $\alpha_{13}=\beta \alpha_1 -p$,
the above equation and \eqref{edgeformula} show
$\alpha_{13}(R')=\alpha_{13}$.

Then the Gorenstein* poset $P$ obtained from $R'$
by unzipping $(\sigma_1,\tau_1)$ $(\alpha_2-2)$ times has the desired $\cd$-index.
\end{proof}

\section{$\dd$-vectors of Gorenstein* posets of rank $5$ and $6$}\label{sec:d-vectors}

In this section,
we study $\dd$-vectors of Gorenstein* posets of rank $5$ and $6$.
We often use the following obvious fact.

\begin{lemma}
Let $x$ and $y$ be non-negative integers.
Then $y \leq {x^2 \over 4}$ if and only if there are non-negative integers $a$ and $b$ such that
$y \leq ab$ and $a+b \leq x$.
\end{lemma}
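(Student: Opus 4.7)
The plan is to dispatch both directions of the equivalence using only elementary manipulations, together with the AM--GM inequality and a case-split by the parity of $x$ for the nontrivial direction.

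For the easy direction, I would assume there exist non-negative integers $a,b$ with $y \leq ab$ and $a+b \leq x$. The AM--GM inequality then gives
\[
ab \leq \left(\frac{a+b}{2}\right)^{\!2} \leq \frac{x^2}{4},
\]
and hence $y \leq x^2/4$.

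For the converse direction, I would assume $y \leq x^2/4$ and produce a pair $(a,b)$ explicitly, splitting by the parity of $x$. If $x = 2m$, then taking $a=b=m$ gives $a+b = x$ and $ab = m^2 = x^2/4 \geq y$. If $x = 2m+1$, then $x^2/4 = m^2 + m + \tfrac{1}{4}$ is not an integer, so the integrality of $y$ forces the stronger bound $y \leq \lfloor x^2/4\rfloor = m(m+1)$; taking $a = m$ and $b = m+1$ then yields $a+b = x$ and $ab = m(m+1) \geq y$. Both choices clearly give non-negative integers.

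There is essentially no obstacle here: the statement is a routine integer-arithmetic fact, and the only delicate point is exploiting $y \in \ZZ$ in the odd case to sharpen the inequality $y \leq x^2/4$ into $y \leq \lfloor x^2/4\rfloor$. I would therefore record the proof in just a few lines.
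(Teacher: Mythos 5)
Your proof is correct and complete. The paper simply states this lemma as an ``obvious fact'' and omits the proof entirely, so there is no argument in the paper to compare against; your AM--GM argument for the forward direction and the parity split (using integrality of $y$ to sharpen $y \leq x^2/4$ to $y \leq \lfloor x^2/4 \rfloor$ when $x$ is odd) for the converse is exactly the natural way to fill this gap.
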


We first classify $\dd$-vectors of Gorenstein* posets of rank $5$.

\begin{theorem}\label{thm:rank5}
The vector $(1,x,y) \in \ZZ_{\geq 0}^3$
is the $\mathbf{d}$-vector of a Gorenstein* poset of rank $5$ if and only if
it satisfies $y \leq {(x-1)^2 \over 4}$ or there are non-negative integers $a$ and $b$ such that $x=a+b$ and $y=ab$.
\end{theorem}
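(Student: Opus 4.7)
Both directions are translations of Theorem~\ref{thm:cd-rank5} under the identification $x=\alpha_1+\alpha_2+\alpha_3$ and $y=\alpha_{13}$, which is immediate from the definitions of $\dd$-vector and $\cd$-index.

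Necessity is a direct case check against Theorem~\ref{thm:cd-rank5}. Case (i) yields $(x,y)=(\alpha_1+\alpha_3,\alpha_1\alpha_3)=(a+b,ab)$. In cases (ii) and (iii) we have $\alpha_2\geq 1$ and, by AM--GM,
\[
y\;\leq\;\alpha_1\alpha_3\;\leq\;\bigl(\tfrac{\alpha_1+\alpha_3}{2}\bigr)^2
\;=\;\bigl(\tfrac{x-\alpha_2}{2}\bigr)^2\;\leq\;\tfrac{(x-1)^2}{4}.
\]

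For sufficiency I split into three subcases. (A) If $(x,y)=(a+b,ab)$ with $a,b\geq 0$, apply Theorem~\ref{thm:cd-rank5}(i). (B) If $y\leq(x-2)^2/4$, the preceding lemma supplies $a,b\geq 0$ with $a+b\leq x-2$ and $ab\geq y$, and Theorem~\ref{thm:cd-rank5}(iii) applies with $(\alpha_1,\alpha_2,\alpha_3,\alpha_{13})=(a,x-a-b,b,y)$. (C) In the narrow band $(x-2)^2/4<y\leq(x-1)^2/4$ I must produce a realization via case (ii) of Theorem~\ref{thm:cd-rank5}, which is the main obstacle. My plan is to take the most balanced split $A=\lfloor(x-1)/2\rfloor$, $B=\lceil(x-1)/2\rceil$, so that $AB$ is the integer maximum of the product over $A+B=x-1$ and hence $AB\geq y$.

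The crux of Case~(C) is realizing $k:=AB-y$ as $\sum_{i=1}^{3}b_ic_i$ for partitions $\sum b_i=A$, $\sum c_i=B$. Not every $k\in[0,AB]$ is achievable in this form (for instance, $k=5$ is unreachable when $(A,B)=(2,3)$), so some care is required. The key inequality is
\[
A(B-1)\;=\;A(x-2-A)\;\leq\;\left\lfloor\tfrac{(x-2)^2}{4}\right\rfloor\;<\;y,
\]
which forces $k<A$. This bound makes the very simple decomposition $(b_1,b_2,b_3)=(k,A-k,0)$ together with $(c_1,c_2,c_3)=(1,0,B-1)$ valid (note $B\geq 1$ since $AB\geq y\geq 1$) and yields $\sum b_ic_i=k$, producing the data $(\alpha_1,\alpha_2,\alpha_3,\alpha_{13})=(A,1,B,y)$ required by Theorem~\ref{thm:cd-rank5}(ii).
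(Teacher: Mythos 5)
Your proposal is correct, and it reaches the same decomposition trick as the paper, but it organizes the sufficiency case split differently. The paper first chooses non-negative integers $a,b$ with $y\le ab$ and $a+b\le x-1$, then \emph{tightens} the choice by decreasing $b$ until $a(b-1)<y\le ab$; this single pair $(a,b)$ is then used regardless of whether $\alpha_2=x-a-b$ turns out to be $\ge 2$ (apply case (iii)) or $=1$ (apply case (ii)), with essentially your decomposition $(ab-y,\,0,\,a-(ab-y))$ against $(1,\,b-1,\,0)$ and the bound $ab-y<a$ coming from the tight choice $a(b-1)<y$. You instead split the range of $y$ in advance: when $y\le (x-2)^2/4$ you secure $\alpha_2\ge 2$ and invoke (iii), and only in the narrow band $(x-2)^2/4<y\le(x-1)^2/4$ do you need (ii), where you fix the \emph{balanced} split $A=\lfloor(x-1)/2\rfloor$, $B=\lceil(x-1)/2\rceil$ and derive $k=AB-y<A$ from $A(B-1)=\lfloor(x-2)^2/4\rfloor<y$. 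Both mechanisms produce the same key inequality ($k<A$, resp.\ $ab-y<a$), so the core idea is identical; the paper's greedy choice is arguably more uniform since it avoids the explicit range dichotomy and the floor/ceiling bookkeeping, while your version makes the role of case (ii) of Theorem~\ref{thm:cd-rank5} (only needed in a thin band of $y$-values) more transparent. For necessity, the paper argues directly: assuming $y>(x-1)^2/4$, the chain $\alpha_{13}\le\alpha_1\alpha_3\le(\alpha_1+\alpha_3)^2/4\le x^2/4$ forces $\alpha_1+\alpha_3=x$, hence $\alpha_2=0$ and case (i) applies; your case-by-case version is equivalent. One small caveat worth noting explicitly: your cases (B) and (C) implicitly require $x\ge 2$ (otherwise $x-2<0$ and the lemma/balanced split do not literally apply), but for $x\in\{0,1\}$ every admissible $(x,y)$ already falls under case (A), so nothing is lost.
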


\begin{proof}
(Necessity).
Let $P$ be a Gorenstein* poset and $\dd(P)=(1,x,y)$.
Suppose $y > {(x-1)^2 \over 4}$.
We show that there are non-negative integers $a$ and $b$ such that $x=a+b$ and $y=ab$.

Observe that $x=\alpha_1(P)+\alpha_2(P)+\alpha_3(P)$ and $y=\alpha_{13}(P)$.
Then
$${(x-1)^2 \over 4}<y= \alpha_{13}(P) \leq \alpha_1(P) \alpha_3(P) \leq
{(\alpha_1(P)+\alpha_3(P))^2 \over 4} \leq {x^2 \over 4}.$$
This says that $\alpha_1(P)+\alpha_3(P)=x$ and therefore $\alpha_2(P)=0$.
Then Theorem \ref{thm:cd-rank5}(i) shows $y=\alpha_1(P)\alpha_3(P)$.

(Sufficiency).
For all non-negative integers $a$ and $b$,
$\dd(C_{a+2}*C_{b+2})=(1,a+b,ab)$ by Lemma \ref{3.1}.
Let $(1,x,y) \in \ZZ_{\geq 0}^3$ with $y \leq {(x-1)^2 \over 4}$.
What we must prove is that there is a Gorenstein* poset of rank $5$ such that $\dd(P)=(1,x,y)$.

Since $y \leq {(x-1)^2 \over 4}$,
there are non-negative integers $a$ and $b$ such that $y \leq ab$ and $a+b \leq x-1$.
We may choose these integers so that $a(b-1) < y \leq ab$.
Let $\alpha_1=a,$ $\alpha_2=x-a-b$, $\alpha_3=b$ and $\alpha_{13}=y$.
It is enough to show that there is a Gorenstein* poset of rank $5$ whose $\cd$-index is
$\cc^4+\alpha_1 \dd\cc^2+ \alpha_2 \cc\dd\cc+ \alpha_3\cc^2\dd+ \alpha_{13}\dd^2$.
If $\alpha_2 \geq 2$ then the existence of such a poset follows from Theorem \ref{thm:cd-rank5}(iii).
If $\alpha_2 <2$ then $\alpha_2=1$.
We claim that $\alpha_1,\alpha_3,\alpha_{13}$ satisfy the conditions in Theorem \ref{thm:cd-rank5}(ii).
If $b=0$ then the statement is obvious.
If $b>0$ then, since $0 \leq (ab-y) <a$, the partition of integers
$a=(ab-y)+0+(a-(ab-y))$ and $b=1+(b-1)+0$ shows that $\alpha_1,\alpha_3,\alpha_{13}$ satisfy the desired conditions.
\end{proof}

Next, we consider Gorenstein* posets of rank $6$.

\begin{theorem}\label{thm:rank6}
The vector $(1,x,y) \in \ZZ_{\geq 0}^3$
is the $\mathbf{d}$-vector of a Gorenstein* poset of rank $6$ if and only if
it satisfies $y \leq {x^2 \over 4}$.
\end{theorem}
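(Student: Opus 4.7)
The plan is to follow the template established for Theorem \ref{thm:rank5}, using the rank-$5$ bound (Lemma \ref{3.2}) for necessity and combining joins with unzipping for sufficiency.

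\textbf{Necessity.} Let $P$ be a Gorenstein* poset of rank $6$, with $\cd$-index
$$\Phi_P(\cc,\dd) = \cc^5 + A_1\dd\cc^3 + A_2\cc\dd\cc^2 + A_3\cc^2\dd\cc + A_4\cc^3\dd + B_1\dd^2\cc + B_2\dd\cc\dd + B_3\cc\dd^2$$
and $\dd$-vector $(1,x,y) = (1,\sum A_i,\sum B_j)$. For each rank-$1$ atom $v$, the upper interval $[v,\hat 1]$ is Gorenstein* of rank $5$; by Lemma \ref{3.2} and AM-GM, its $\dd$-vector $(1,x_v,y_v)$ satisfies $y_v \leq x_v^2/4$. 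The crux is to express $x$ and $y$ as linear combinations of the $x_v$ and $y_v$ (and dually of the analogous quantities for the rank-$4$ coatoms $w$ via the intervals $[\hat 0,w]$) via the flag $f$-vector identity $f_{\{1\}\cup S}(P)=\sum_{r(v)=1} f_{S-1}([v,\hat 1])$, and then combine the local bounds by Cauchy--Schwarz to deduce $y \leq x^2/4$.

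\textbf{Sufficiency.} Given $(1,x,y)$ with $y \leq x^2/4$, the preceding lemma furnishes non-negative integers $a,b$ with $y \leq ab$ and $a+b \leq x$; pick them so $a(b-1) < y \leq ab$. The base poset $P_0 = \hat{B}_2 * C_{a+2} * C_{b+2}$ is Gorenstein* of rank $2+3+3-2 = 6$ by Lemma \ref{3.1}, with
$$\Phi_{P_0}(\cc,\dd) = \cc(\cc^2+a\dd)(\cc^2+b\dd) = \cc^5 + a\cc\dd\cc^2 + b\cc^3\dd + ab\cc\dd^2,$$
hence $\dd$-vector $(1,a+b,ab)$. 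To reach $(1,x,y)$, apply a sequence of unzippings; by Lemma \ref{obs:zip cd-index}, unzipping a cover $(\sigma,\tau)$ increments the $\cd$-index by $\Phi_{[\hat 0,\tau]} \cdot \dd \cdot \Phi_{[\sigma,\hat 1]}$, contributing $(\Delta x,\Delta y)=(1,\,\delta_1([\hat 0,\tau])+\delta_1([\sigma,\hat 1]))$ to the $\dd$-vector. In $P_0$, covers from an edge of $C_{a+2}$ to an incident vertex of $C_{b+2}$ have bracketing intervals $\hat{B}_2 * \hat{B}_2$ (below) and $\hat{B}_2$ (above), both with $\delta_1 = 0$, so they enlarge $x$ while preserving $y$; other covers produce controlled $\Delta y > 0$. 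A case analysis paralleling Theorem \ref{thm:cd-rank5}(iii) selects the unzippings needed to hit $(1,x,y)$ exactly. Boundary cases (e.g.\ $y = 0$, $a = 0$, $b = 0$) are dispatched via the rank-$5$ lifts $\hat{B}_2 * Q_5$, using Theorem \ref{thm:rank5}.

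\textbf{Main obstacle.} The necessity direction is the delicate one: one must pin down the correct coproduct-type identities expressing each $B_j$ (and the $A_i$'s) in terms of rank-$5$ interval data, and then apply Cauchy--Schwarz with the right weighting so that the global $y \leq x^2/4$ emerges from the local $y_v \leq x_v^2/4$. The sufficiency is more mechanical but still requires careful bookkeeping to cover every admissible $(x,y)$, particularly when $y$ falls strictly between consecutive products $a'b'$ with $a'+b'$ fixed, where purely rank-$5$ constructions are insufficient and the unzipping sequence must be tailored accordingly.
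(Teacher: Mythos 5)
Your overall plan --- derive necessity from the known coefficient-wise $\cd$-inequalities and build realizing posets from joins of cycles together with unzips --- is in the right spirit, but both halves have genuine gaps.

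\textbf{Sufficiency.} Your base poset $P_0 = \hat{B}_2 * C_{a+2} * C_{b+2}$ has $\dd(P_0) = (1, a+b, ab)$, and you chose $a,b$ with $y \leq ab$. But by Lemma \ref{obs:zip cd-index} every unzip adds a non-negative $\cd$-monomial, hence can only \emph{increase} $\delta_1$ and $\delta_2$; there is no way to decrease $\delta_2$ from $ab$ to a strictly smaller $y$, so your construction cannot reach $(1,x,y)$ whenever $y < ab$. The paper calibrates in the opposite direction: after reducing to the regime ${(x-1)^2}/{4} < y \leq {x^2}/{4}$ (joining a rank-$5$ poset with $\hat{B}_2$ leaves the $\dd$-vector unchanged, so the other cases are handled by Theorem \ref{thm:rank5}), it picks positive integers with $a+b=x$ and $a(b-1) < y \leq ab$, sets $r=ab-y$, and starts from $Q = C_{a-r+2} * \hat{B}_2 * C_{b+1}$ with $\dd(Q) = (1,\, a-r+b-1,\, (a-r)(b-1))$. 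This sits \emph{below} $(1,x,y)$ in both entries by exactly the increments that $r$ unzips of $(\sigma,\rho)$ plus one unzip of $(\pi,\tau)$ contribute, namely $r+1$ in $\delta_1$ and $r(b-1)+(a-r)$ in $\delta_2$. Your ``boundary cases'' deferred to rank-$5$ lifts also omit precisely this main regime.

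\textbf{Necessity.} Your outline (pass to atom intervals $[v,\hat 1]$, invoke Lemma \ref{3.2} locally, recombine via Cauchy--Schwarz) is not carried out, and it is unclear it can be made to work: you neither exhibit the coproduct-type identities expressing $x,y$ through the $x_v,y_v$, nor verify that the parabolic bounds survive recombination; naive averaging of $y_v \leq x_v^2/4$ does not produce $y \leq x^2/4$. The paper does something simpler and more direct: it quotes \cite[Proposition 4.4]{Murai-Nevo-S*} \emph{in rank $6$}, giving $\alpha_{13} \leq \alpha_1\alpha_3$, $\alpha_{14} \leq \alpha_1\alpha_4$, $\alpha_{24} \leq \alpha_2\alpha_4$, and then finishes with
\[
y \;\leq\; \alpha_1\alpha_3 + \alpha_1\alpha_4 + \alpha_2\alpha_4 \;\leq\; (\alpha_1+\alpha_2)(\alpha_3+\alpha_4) \;\leq\; \frac{x^2}{4}.
\]
The middle step, which throws in the unused cross-term $\alpha_2\alpha_3$ (there is no $\cd$-word corresponding to the index pair $\{2,3\}$ in rank $6$), is the key algebraic observation, and it would not emerge from atom-interval data alone.
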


\begin{proof}
(Necessity).
Let $P$ be a Gorenstein* poset of rank $6$ and $\dd(P)=(1,x,y)$.
We write the $\cd$-index of $P$ in the form
$$\Phi(P)=c^5+ \alpha_1\dd\cc^3 + \alpha_2\cc\dd\cc^2 + \alpha_3\cc^2\dd\cc + \alpha_4\cc^3\dd + \alpha_{13}\dd^2\cc
+ \alpha_{14}\dd\cc\dd + \alpha_{24}\cc\dd^2.$$

It was proved in \cite[Proposition 4.4]{Murai-Nevo-S*} that $\alpha_{13} \leq \alpha_1\alpha_3$, $\alpha_{14} \leq \alpha_1\alpha_4$ and $\alpha_{24} \leq \alpha_2\alpha_4$. Then, since $x=\alpha_1+\alpha_2+\alpha_3+\alpha_4$ and $y=\alpha_{13}+\alpha_{14}+ \alpha_{24}$,
we have
$$y\leq \alpha_1\alpha_3 + \alpha_1\alpha_4 + \alpha_2\alpha_4\leq (\alpha_1+\alpha_2)(\alpha_3+\alpha_4)\leq {x^2 \over 4},$$
as desired.

(Sufficiency).
Let $(1,x,y) \in \ZZ_{\geq 0}^3$ with $y \leq {x^2 \over 4}$.
We show that there is a Gorenstein* poset of rank $6$ whose $\dd$-vector is $(1,x,y)$.
Since $\dd(P)=\dd(P*\hat B_2)$ for any Gorenstein* poset $P$ of rank $5$,
by Theorem \ref{thm:rank5}, we may assume ${(x-1)^2 \over 4} < y \leq {x^2 \over 4}$.
Then there are positive integers $a$ and $b$ such that
$a(b-1) < y \leq ab$ and $a+b =x$.
Let
$r=ab-y$.
Then $0 \leq r <a$ and
$$(b-1)(a-r)+r(b-1)+(a-r)=y.$$

Let $Q=C_{a-r+2}*\hat{B}_2*C_{b+1}$.
By Lemma \ref{3.1}, $\dd(Q)=(1,a-r+b-1,(a-r)(b-1))$.
Let $\rho$ be a rank $2$ element of $P$,
$\pi$ a rank $4$ element of $P$,
and let $\sigma,\tau$ be the rank $3$ elements of $P$.
Note that $[\hat0,\rho]=[\pi,\hat1]=\hat{B}_2$,
$[\hat 0,\tau] =C_{a-r+2}$
and
$[\sigma,\hat 1] =C_{b+1}$.
Let $P$ be the Gorenstein* poset obtained from $Q$ by unzipping $(\sigma,\rho)$ $r$ times
and by unzipping $(\pi,\tau)$.
Then, by Lemma \ref{obs:zip cd-index}, we have
\begin{align*}
\dd(P)&=\dd(Q)+r(0,1,b-1)+(0,1,a-r)\\
&=(1,a+b,(a-r)(b-1)+r(b-1)+(a-r))\\
&=(1,x,y),
\end{align*}
as desired.
\end{proof}

\begin{remark}
Gal \cite{Gal} proved that $\gamma_2(\Delta) \leq {\gamma_1(\Delta)^2 \over 4}$
for any flag homology $4$-sphere $\Delta$
(see Section 5 for details).
This result and the relation between $\dd$-vectors and $\gamma$-vectors
give an alternative proof of the necessity of Theorem \ref{thm:rank6}.
\end{remark}

\begin{remark}\label{rem:polytopal-cd}
For the posets $P$ constructed to show sufficiency in Theorems \ref{thm:cd-rank5}, \ref{thm:rank5} and \ref{thm:rank6},
their order complexes $\sd (P)$ are \emph{polytopal}, namely, can be realized as the boundary of a polytope.

Indeed, it is not hard to see that $P$ is obtained from the join
$\hat B_2 * \hat B_2 * \cdots * \hat B_2$ of Boolean algebras of rank $2$ by applying unzipping repeatedly.
The order complex of the join of Boolean algebras of rank $2$ is the boundary of a cross polytope.
As edge subdivisions preserve polytopality (just place the new vertex \emph{beyond} the edge, cf.\  \cite[p.\ 78]{Ziegler}),
by Proposition \ref{prop:unzip-edge sd}(3) we conclude that $\sd (P)$ is polytopal.
\end{remark}



\section{Questions}\label{sec:Q}

Let $\Delta$ be an $(n-1)$-dimensional simplicial complex.
Recall that the {\em $f$-vector} $f(\Delta)=(1,f_0,f_1,\dots,f_{n-1})$ of $\Delta$ is defined by $f_i=|\{F \in \Delta: |F|=i+1\}|$,
and the {\em $h$-vector}
$h(\Delta)=(h_0,h_1,\dots,h_n)$ of $\Delta$ is defined by the relation
$$\sum_{i=0}^n h_ix^{n-i} = \sum_{i=0}^n f_{i-1} (x-1)^{n-i}.$$
If $\Delta$ is Gorenstein* then $h_i=h_{n-i}$ for all $i$ by the Dehn-Sommerville equations,
and in this case the $\gamma$-vector $\gamma(\Delta)=(\gamma_0,\gamma_1,\dots,\gamma_{\lfloor \frac n 2 \rfloor})$
of $\Delta$ is defined by the relation
$$\sum_{i=0}^n h_i x^{i} = \sum_{i=0}^{\lfloor \frac n 2 \rfloor} \gamma_i x^i(1+x)^{n-2i}.$$
Thus, if $\Delta$ is the order complex $\sd(P)$ of a Gorenstein* poset $P$,
$\gamma(\Delta)$ and $\dd(P)=(1,\delta_1,\dots,\delta_{\lfloor \frac n 2 \rfloor})$ are related by
$\gamma_i= 2^i \delta_i$ for all $i$.

A simplicial complex is said to be \textit{flag} if every minimal non-face has at most two elements.
The study of $\gamma$-vectors of flag homology spheres (namely, flag Gorenstein* complexes) is one of current trends in face enumeration.
On $\gamma$-vectors of flag homology spheres,
one of the most important open problems is the conjecture of Gal \cite[Conjecture 2.1.7]{Gal}
which states that the $\gamma$-vector of
a flag homology sphere is non-negative.
Gal's conjecture is known to be true in dimension $\leq 4$.
Moreover, in low dimension
Gal essentially proved the following result.

\begin{theorem}[Gal]
\label{rem:gamma-3,4}
Let $\Lambda_k$ be the set of $\gamma$-vectors of flag homology $k$-spheres.
\begin{itemize}
\item[(i)]
$\Lambda_3\! \supset \{(1,x,y)\in\ZZ_{\geq 0}^3: y\leq \frac{(x-1)^2}{4} \mbox{ or }(x,y)=(a+b,ab) \mbox{ for some }a,b \in \ZZ_{\geq 0}\}$.
\item[(ii)]
$\Lambda_4= \{(1,x,y)\in\ZZ_{\geq 0}^3: y\leq \frac{x^2}{4}\}$,
\end{itemize}
\end{theorem}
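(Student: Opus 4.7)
The plan is to parallel the strategies of Theorems \ref{thm:rank5} and \ref{thm:rank6}, substituting flag-preserving operations for unzipping. The necessity direction in (ii) is exactly Gal's inequality $\gamma_2\le \gamma_1^2/4$ for flag homology $4$-spheres \cite{Gal}. The substance of the proof is the sufficiency: constructing flag homology spheres realizing every claimed $\gamma$-vector.

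Three building blocks suffice. First, each cycle $C_k$ with $k\ge 4$ is a flag homology $1$-sphere with $\gamma$-vector $(1,k-4)$, so the join $C_{a+4}*C_{b+4}$ is a flag homology $3$-sphere with $\gamma$-vector $(1,a+b,ab)$, using multiplicativity of $\gamma$-polynomials under joins (parallel to Lemma \ref{3.1}). This immediately realizes the multiplicative family in (i). Second, the suspension $\Sigma\Delta=\Delta*S^0$ preserves flagness (the only new minimal non-face is the pair of suspension vertices) and preserves the $\gamma$-vector, since $h_{\Sigma\Delta}(x)=(1+x)h_\Delta(x)$ absorbs the dimension increase in the $\gamma$-expansion. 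Consequently every $\gamma$-vector realized by a flag $3$-sphere also arises from a flag $4$-sphere.

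Third, the stellar subdivision of an edge in a flag complex is again flag: the only new minimal non-face is the subdivided edge itself, since a larger new minimal non-face would force a minimal non-face of size $\geq 3$ in the original flag complex. A direct calculation using $\gamma_2=f_1-(2n-3)f_0+2n^2-4n$ shows that in a flag $n$-sphere, subdividing an edge whose link has $\ell$ vertices increases $\gamma_1$ by $1$ and $\gamma_2$ by $\ell-(2n-4)\ge 0$. In $C_{a+4}*C_{b+4}$ the ``join-edges'' have the minimal link ($\ell=4=2n-4$ for $n=4$), so their subdivisions raise $\gamma_1$ without changing $\gamma_2$, while intra-factor edges have longer links and raise $\gamma_2$ by controlled positive amounts. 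Combining joins, suspensions, and appropriate sequences of edge subdivisions should yield all remaining target $\gamma$-vectors in (i) and (ii).

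The main obstacle is matching each target $(x,y)$ exactly. Since edge subdivisions can only raise $(\gamma_1,\gamma_2)$, one must pick a starting join $C_{a+4}*C_{b+4}$ whose $\gamma$-vector is componentwise $\le(x,y)$ and then arrange subdivisions whose increments sum precisely to $(x-a-b,\,y-ab)$. This combinatorial bookkeeping relies on the arithmetic parameterization (existence of $a,b$ with $a+b\le x$ and $ab\ge y$) from the opening lemma of Section \ref{sec:d-vectors}. Part (ii) targets with $(x-1)^2/4<y\le x^2/4$ are not reducible to part (i) via suspension --- for example $(1,5,5)$, since $5\neq ab$ for any integer $a,b$ with $a+b\le 5$ --- and require extra care, likely via subdivisions performed after suspension that exploit the richer link structure available in dimension $4$.
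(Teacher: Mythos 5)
Your plan assembles exactly the tools the paper uses — joins of cycles with $S^0$, multiplicativity of $\gamma$ under joins, invariance of $\gamma$ under suspension, and the fact that subdividing an edge whose link has $\ell$ vertices in an $(n-1)$-dimensional flag sphere adds $(1,\ell-(2n-4))$ to $(\gamma_1,\gamma_2)$ — and your increment formula checks out. But the proposal stops at ``should yield'' and ``require extra care,'' and that is precisely where the content of the proof lies. The only part the paper actually proves (it cites Gal for~(i) and for $\Lambda_4\subseteq\{y\leq x^2/4\}$) is the containment $\supseteq$ in~(ii), and there the crucial move is to write down the starting join and the subdivision counts explicitly and verify the arithmetic closes.

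Concretely: after reducing via suspension to the case $\frac{(x-1)^2}{4}<y\leq\frac{x^2}{4}$, the paper chooses $a,b\geq 1$ with $a+b=x$ and sets $r:=ab-y$ (so $0\leq r<a$), then starts from the \emph{asymmetric} join $\tilde C_{a-r+4}*\tilde B_2*\tilde C_{b+3}$, which has $\gamma=(1,\,a-r+b-1,\,(a-r)(b-1))$. Subdividing the edge from the $\tilde B_2$-vertex $x$ into $\tilde C_{a-r+4}$ a total of $r$ times contributes $(1,b-1)$ each, and subdividing the edge from $y$ into $\tilde C_{b+3}$ once contributes $(1,a-r)$; the totals are $\gamma_1=a+b=x$ and $\gamma_2=(a-r)(b-1)+r(b-1)+(a-r)=ab-r=y$. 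Your example $(1,5,5)$ is then routine: take $(a,b,r)=(2,3,1)$, start from $\tilde C_5*\tilde B_2*\tilde C_6$ with $\gamma=(1,3,2)$, and do one subdivision of each type. The gap in your argument is that you left the choice of starting join and the subdivision counts unspecified; the symmetric $\tilde C_{a'+4}*\tilde C_{b'+4}*S^0$ you begin from is fine in principle (it is the same family of complexes), but without the asymmetric reparameterization the ``combinatorial bookkeeping'' you defer does not obviously close, and it is that bookkeeping, not the ingredients, that constitutes the proof.
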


\begin{proof}
(i) is proved in \cite[Theorem 3.2.1]{Gal} and $\Lambda_4\subseteq \{(1,x,y)\in\ZZ_{\geq 0}^3: y\leq \frac{x^2}{4}\}$ is proved in \cite[Theorem 3.1.3]{Gal}.
It remains to show the reverse containment in (ii).

We sketch the proof since it is similar to that of Theorem \ref{thm:rank6}.
Let $(1,x,y) \in \ZZ_{\geq 0}^3$ with $y \leq {x^2 \over 4}$.
As taking suspension does not change the $\gamma$-vector, we may assume ${(x-1)^2 \over 4} < y \leq {x^2 \over 4}$; in particular $x\geq 2$ and $y\geq 1$.
Then there are $a,b,r \in \ZZ_{\geq 0}$ with $a,b\geq 1$ and $r <a$  such that $a+b=x$ and $ab-r =y$.
Consider the simplicial complex $K=\tilde C_{a-r+4}*\tilde{B}_2* \tilde C_{b+3}$,
where $\tilde C_k$ is the cycle ($1$-dimensional simplicial sphere) of length $k$
and where $\tilde{B}_2$ is the $0$-sphere with vertices $\{x,y\}$.
(Here we consider the join as simplicial complexes.)
Then, by multiplicativity of $\gamma$ w.r.t. joins (cf. \cite[Remark 2.1.9]{Gal}), $\gamma(K)=(1,a-r+b-1,(a-r)(b-1))$.
Let $\{s\} \in C_{a-r+4}$ and  $\{t\} \in C_{b+3}$.
If we subdivide the edge $\{x,s\}$ $r$-times and subdivide the edge $\{y,t\}$,
by \cite[Proposition 2.4.3]{Gal},
we obtain a flag $4$-sphere $\Delta$ with $\gamma(\Delta)=(1,x,y)$.
%
\end{proof}

Moreover,
Gal \cite[Conjecture 3.2.2]{Gal} conjectured the following, which, if true, gives a complete characterization of the $f$-vectors of flag homology $3$-spheres.

\begin{conjecture}[Gal]
Let $\Delta$ be a  flag homology $3$-sphere and let $\gamma(\Delta)=(1,\gamma_1,\gamma_2)$.
If $\gamma_2 > {(\gamma_1-1)^2 \over 2}$ then $\Delta$ is the join of two cycles.
\end{conjecture}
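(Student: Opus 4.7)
My plan is to reformulate Gal's conjecture in graph-theoretic terms via a link identity, then adapt the join-decomposition strategy of Theorem~\ref{thm:rank5} to the flag setting. For a flag homology $3$-sphere $\Delta$ with $1$-skeleton $G$, each vertex link $\mathrm{lk}_\Delta(v)$ is a flag $2$-sphere, so $\gamma_1(\mathrm{lk}_\Delta(v)) = \deg_G(v) - 6$. Combined with the standard formulas $\gamma_1(\Delta) = f_0(\Delta) - 8$, $\gamma_2(\Delta) = f_1(\Delta) - 5 f_0(\Delta) + 16$, and the handshake $\sum_v \deg_G(v) = 2 f_1(\Delta)$, this yields the identity
$$
\sum_{v \in \Delta} \gamma_1\bigl(\mathrm{lk}_\Delta(v)\bigr) \;=\; 2\gamma_2(\Delta) + 4\gamma_1(\Delta).
$$
Hence the conjectured inequality $\gamma_2(\Delta) \leq (\gamma_1(\Delta)-1)^2/2$ is equivalent to the link-sum bound $\sum_v \gamma_1(\mathrm{lk}_\Delta(v)) \leq (\gamma_1(\Delta) + 1)^2$. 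Moreover, $\Delta$ decomposes as a join of flag subcomplexes iff $G$ is a graph-theoretic join, iff the complement $\bar G$ is disconnected; and for joins $C_a * C_b$, setting $u = a-4$, $v = b-4$, the slack $(\gamma_1 + 1)^2 - \sum_v \gamma_1(\mathrm{lk}_\Delta(v))$ equals $(u-1)^2 + (v-1)^2 - 1$, which is negative only for $u = v = 1$. Thus the conjecture amounts to saying that the unique flag homology $3$-sphere violating the link-sum bound is $C_5 * C_5$.

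The plan for attacking this reformulation has three stages. (A) Use the connectedness of $\bar G$ to locate non-adjacent, non-twin vertex pairs $(u,w)$ in $\Delta$, and extract from each such pair a \emph{saturation defect}: a quantitative lower bound on $2(\gamma_1(\Delta)-1) - \gamma_1(\mathrm{lk}_\Delta(u)) - \gamma_1(\mathrm{lk}_\Delta(w))$ measuring how far $u$ and $w$ fall short of being poles of a join. (B) Aggregate these defects along a spanning structure in $\bar G$ (e.g.\ a spanning tree or a maximal matching) to obtain a global lower bound on $(\gamma_1+1)^2 - \sum_v \gamma_1(\mathrm{lk}_\Delta(v))$. (C) Rule out the remaining near-extremal configurations by direct enumeration, using the classification of flag $2$-spheres with small $\gamma_1$ (Davis--Okun, Gal) for the permissible links together with GLBT-type inequalities among flag $h$-vectors.

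The decisive obstacle is Stage~(A): quantitatively certifying a saturation defect. In the Gorenstein$^*$ analogue (Lemma~\ref{3.3}), the vanishing of the middle $\cc\dd$-coefficient $\alpha_2(P)$ algebraically forces a join decomposition via interval analysis in the poset; no comparable algebraic certificate is available for arbitrary flag homology $3$-spheres. Promising substitutes are the PL-rigidity framework of Fogelsanger and Nevo (which can convert local link conditions into global structural constraints), an extremal double-count of $4$-cycles in $G$ (whose abundance is forced by large $\gamma_2$), or an induction on $f_0(\Delta)$ via edge contractions satisfying the Link condition (which by Proposition~\ref{prop:unzip-edge sd} and its flag variants preserve the sphere property). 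Since the conjectural slack vanishes only at $C_5 * C_5$, the inequalities throughout Stages~(A)--(C) must be essentially tight, and crafting an argument that is simultaneously sharp and robust enough to cover every non-join flag $3$-sphere is the crux of the difficulty.
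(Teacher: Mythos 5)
The statement you were asked about is a \emph{conjecture}, attributed to Gal \cite[Conjecture 3.2.2]{Gal}, and the paper does not prove it: it only records the observation that Lemma~\ref{3.3} together with the proof of Theorem~\ref{thm:rank5} establishes it \emph{in the special case where $\Delta$ is the order complex of a Gorenstein$^*$ poset}, because there the vanishing of $\alpha_2(P)$ forces a join decomposition of the poset. No argument for general flag homology $3$-spheres appears in the paper, and the question remains open.

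Your proposal likewise does not constitute a proof, and you say as much. The preliminary reformulation is correct and worth recording: the identity $\sum_v \gamma_1(\lk_\Delta(v)) = 2\gamma_2(\Delta)+4\gamma_1(\Delta)$ follows cleanly from $\gamma_1(\Delta)=f_0-8$, $\gamma_2(\Delta)=f_1-5f_0+16$, $\gamma_1(\lk_\Delta(v))=\deg_G(v)-6$ and the handshake lemma, so the target inequality $\gamma_2\leq\frac{(\gamma_1-1)^2}{2}$ is indeed equivalent to $\sum_v \gamma_1(\lk_\Delta(v))\leq(\gamma_1+1)^2$, and the slack for $C_a*C_b$ (with $u=a-4$, $v=b-4$) does work out to $(u-1)^2+(v-1)^2-1$, negative only at $C_5*C_5$. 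But the load-bearing step, Stage~(A) --- extracting a quantitative ``saturation defect'' from every non-twin, non-adjacent pair and aggregating it over $\bar G$ --- is exactly where the argument stops being an argument: you offer three candidate tools (Fogelsanger--Nevo rigidity, $4$-cycle double counting, Link-condition edge contractions) without showing that any of them yields the needed bound, and you acknowledge that the required inequalities must be essentially tight at $C_5*C_5$, which rules out any slack in the eventual estimates. In the Gorenstein$^*$ analogue the paper exploits a rigid algebraic certificate ($\alpha_2=0$ forcing a poset join via Lemma~\ref{3.3}); no such certificate is known for arbitrary flag homology $3$-spheres, and supplying one is precisely the open problem. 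So treat what you have as a correct and potentially useful \emph{reformulation} plus a plausible research plan, not as a proof of the conjecture.
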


Note that Lemma \ref{3.3} and the proof of Theorem \ref{thm:rank5} show that the above conjecture is true for order complexes.
If the above conjecture is true then the inclusion in Theorem \ref{rem:gamma-3,4}(i)
becomes an equality.
These facts and Theorem \ref{thm:rank5,6d-vectors} suggest the following question.

\begin{question}\label{Q1}
Let $\mathcal D_k$ be the set of $\dd$-vectors of Gorenstein* posets of rank $k+2$.
Is there any relation between $\Lambda_k$ and $\mathcal D_k$?
Is it true that $\Lambda_k = \mathcal D_k$ for all $k$?
Or at least does $\Lambda_k$ contain $\mathcal D_k$?
\end{question}

Note that equality in Question \ref{Q1} would imply Gal's conjecture on the non-negativity of the $\gamma$-vector of flag spheres, by Karu's result on the non-negativity of the $\cd$-index of Gorenstein* posets \cite{Karu-cd}.
\\

\textbf{Acknowledgments}:
The second author would like to thank artblau Tanzwerkstatt Braunschweig and Cornell University math departnemt for the hospitality while working on this manuscript.

\bibliography{gbiblio}
\bibliographystyle{plain}

\end{document}